\newtheorem{theorem}{Theorem}
\newtheorem{corollary}[theorem]{Corollary}
\newtheorem{lemma}[theorem]{Lemma}
\newtheorem{proposition}[theorem]{Proposition}
\newtheorem{remark}[theorem]{Remark}
\newenvironment{proof}[1][Proof]{\noindent\textbf{#1.} }{\ \rule{0.5em}{0.5em}}
\newcommand{\ud}{\,\mathrm{d}}
\newcommand{\p}{\ensuremath{\partial}}
\newcommand{\n}{\ensuremath{\nonumber}}
\newcommand{\eps}{\ensuremath{\varepsilon}}
\newcommand{\E}{\ensuremath{\mathcal{E}}}
\newcommand{\bigO}{\mathcal{O}}
\title{\vspace{-50pt} Stationary Inviscid Limit to Shear Flows}
\author{ \Large Sameer Iyer \footnote{\url{sameer_iyer@brown.edu}. Division of Applied Mathematics, Brown University, 182 George Street, Providence, RI 02912, USA. Partially supported by NSF grant DMS-1611695} \hspace{10 mm} Chunhui Zhou \footnote{\url{zhouchunhui@seu.edu.cn}. School of Mathematics, Southeast University, Nanjing, China.} }
\date{November 15, 2017}
\begin{document}

\maketitle

\begin{abstract}
In this note we establish a density result for certain stationary shear flows, $\mu(y)$, that vanish at the boundaries of a horizontal channel. We construct stationary solutions to 2D Navier-Stokes that are $\eps$-close in $L^\infty$ to the given shear flow. Our construction is based on a coercivity estimate for the Rayleigh operator, $R[v]$, which is based on a decomposition made possible by the vanishing  of $\mu$ at the boundaries. 
\end{abstract}

\section{Introduction}

We are considering 2D, stationary flows on the strip: 
\begin{align}
\Omega = (0,L) \times (0,2). 
\end{align}

We consider an Euler shear flow:
\begin{align}
\bold{u}^0 = (\mu(y), 0). 
\end{align}

Let $\bold{u}^\eps$ solve the Navier-Stokes equations: 
\begin{align} \label{main.NS}
\left.
\begin{aligned}
&\bold{u}^\eps \cdot \nabla \bold{u}^\eps + \nabla P^\eps = \eps \Delta \bold{u}^\eps \\
&\nabla \cdot \bold{u}^\eps = 0 \\
&\bold{u}^\eps|_{y = 0} = 0,  \bold{u}^\eps|_{y = 2} = u_b 
\end{aligned}
\right\}.
\end{align}

Here $u_b \ge 0$ denotes the velocity of the boundary at $\{y = 2\}$. Our main result, Theorem \ref{Th.no.slip} treats the non-moving case of $u_b = 0$. We are interested in the asymptotic behavior of $\bold{u}^\eps$ as $\eps \rightarrow 0$. In the presence of boundaries, the vanishing viscosity asymptotics are a major open problem in fluids made challenging due to the mismatch between the no-slip condition $\bold{u}^\eps|_{\p \Omega} = 0$ and the no penetration condition typically satisfied by Euler flows: $\bold{u}^0 \cdot \bold{n} = 0$. This mismatch is typically rectified by the presence of Prandtl's boundary layer (see \cite{GN}, \cite{Iyer}, \cite{Iyer2}, \cite{Iyer3} for relevant results in the 2D stationary setting). In this article, we will consider Euler flows that themselves satisfy no-slip: 
\begin{align}
\mu(0) = 0, 
\end{align}

for which there is no leading order boundary layer. Denote now the asymptotic expansion: 
\begin{align} \label{asy.ex.1}
\bold{u}^\eps := \begin{pmatrix} u^\eps \\  v^\eps \end{pmatrix} =\begin{pmatrix} \mu + \eps u^1_e + \eps u^1_p + \eps^{\frac{3}{2}} u^2_e + \eps^{\frac{3}{2}} u^2_p + \eps^{\frac{3}{2}+\gamma}u \\  \eps v^1_e + \eps^{\frac{3}{2}} v^1_p + \eps^{\frac{3}{2}} v^2_e + \eps^{2} v^2_p + \eps^{\frac{3}{2}+\gamma}v \end{pmatrix}. 
\end{align}

We denote: 
\begin{align}
&u_s := \mu + \eps u^1_e + \eps u^1_p + \eps^{\frac{3}{2}} u^2_e + \eps^{\frac{3}{2}} u^2_p, \\
&v_s := \eps v^1_e + \eps^{\frac{3}{2}} v^1_p + \eps^{\frac{3}{2}} v^2_e + \eps^{2} v^2_p.
\end{align}

We impose the boundary conditions: 
\begin{align} \label{bc.d}
&[u,v]|_{x = 0} = [u,v]|_{y = 0} = [u,v]|_{y = 2} = 0, \\ \label{bc.sf}
& \p_y u + \p_x v = 0, \hspace{3 mm} P = 2\eps \p_x u. 
\end{align}

The system satisfied by $[u,v]$ is: 
\begin{align} \label{sys.1}
\left.
\begin{aligned}
& -\eps \Delta u +S_u + \p_x P = f:=  \mathcal{N}_1(u,v) + \mathcal{F}_u \\
& -\eps \Delta v + S_v + \p_y P = g:= \mathcal{N}_2(u,v) + \mathcal{F}_v \\
& \p_x u + \p_y v = 0
\end{aligned}
\right\} \text{ in } \Omega. 
\end{align}

We have defined: 
\begin{align}
&S_u := u_s u_x + u_{sx}u + u_{sy} v + v_s u_y, \hspace{2 mm} S_v := u_s v_x + v_{sx}u + v_s v_y + v v_{sy}, \\ \label{defn.N}
&\mathcal{N}_1 := -\eps^{\frac{3}{2}+\gamma} \Big( u\p_x u + v \p_y u \Big), \hspace{3 mm} \mathcal{N}_2 := -\eps^{\frac{3}{2}+\gamma} \Big( u\p_x v + v \p_y v \Big), 
\end{align}

and $\mathcal{F}_{u}, \mathcal{F}_v$ are defined in (\ref{Fu.Fv}). Let us now define several norms in which we will control the solution: 
\begin{align}
&||u,v||_{\mathcal{E}} := ||\sqrt{\eps} \nabla u||_{L^2} + ||\sqrt{\eps} \nabla v||_{L^2}, \\
&||u,v||_{\mathcal{P}} := ||\sqrt{u_s} \nabla v||_{L^2}, \\
&||u,v||_{X} := ||u,v||_{\mathcal{E}} + \eps^{\frac{\gamma}{2}} ||\sqrt{\eps} \{ u, v \}||_{\infty}
\end{align}

We introduce here the notation: 
\begin{align}
\tilde{y} = y \cdot (2-y).
\end{align}

The main theorems we prove are the following: 
\begin{theorem} \label{Th.no.slip} Let $u_b \ge 0$ in (\ref{main.NS}). Let $\mu(y) \in C^\infty([0,2])$ be a given function, satisfying the conditions: 
\begin{align} \label{as.1}
&\mu(0) = 0, \mu(2) = u_b, \\ \label{as.2}
&\p_y^j \mu(0) = \p_y^j \mu(2) = 0 \text{ for } 2 \le j \le N_0, \\
&\p_y \mu(0) > 0, |\p_y \mu(2)| > 0.
\end{align}

where $N_0 < \infty$ and large but unspecified.\footnote{We have selected not to optimize $N_0$. The optimal $N_0$ is likely between 4 and 10.} Let also standard compatibility conditions at the corners of $\Omega$ be prescribed for the layers in $u_s$.\footnote{ We omit stating the precise form of these compatibility conditions here. They can be found in (\ref{compat.1}), (\ref{comp.BL}).} Then there exists a unique solution, $\bold{u}^\eps$ satisfying the Navier-Stokes equations, (\ref{main.NS}), such that: 
\begin{align}
||u^\eps - \mu||_{\infty} + ||v^\eps||_\infty \le c_0(\mu)  \eps. 
\end{align}

The constant $c_0(\mu)$ satisfies: 
\begin{align}
c_0(\mu) \lesssim ||\frac{\mu'''}{\mu}||_{W^{100,\infty}}.
\end{align}
\end{theorem}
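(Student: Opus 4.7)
The plan follows the three-step scheme standard for such stationary inviscid-limit problems: (i) construct a high-order Euler/Prandtl approximate solution via the multi-scale ansatz (\ref{asy.ex.1}); (ii) prove that the linearized problem (\ref{sys.1}) is invertible, with the Rayleigh operator $R[v]$ providing the main coercivity; and (iii) close the nonlinear problem by a contraction argument in the $X$-norm.

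First, I would build the profiles by inserting (\ref{asy.ex.1}) into (\ref{main.NS}) and matching powers of $\sqrt{\eps}$. The Euler correctors $(u^j_e, v^j_e)$ solve the Rayleigh problem linearized about the shear $\mu$, while the Prandtl correctors $(u^j_p, v^j_p)$ are boundary layers at $y=0$ and $y=2$ that restore no-slip. The vanishing hypothesis $\p_y^j \mu(0) = \p_y^j \mu(2) = 0$ for $2 \le j \le N_0$ forces rapid decay of the Prandtl profiles and keeps the construction consistent at both walls simultaneously; the corner compatibility conditions (\ref{compat.1}), (\ref{comp.BL}) are imposed at each order. The truncation error then sits in $\mathcal{F}_u, \mathcal{F}_v$ at size $\bigO(\eps^M)$ with $M$ as large as needed.

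Second, for the linear theory I would work with the stream function $\psi$ ($u = \p_y \psi$, $v = -\p_x \psi$) and test the momentum equations against $(u,v)$ together with a weighted multiplier adapted to $\mu$. The diffusive part contributes the $\E$-norm and the inviscid (Rayleigh) part contributes the $\Po$-norm. The central difficulty, which is the main obstacle of the paper, is the coercivity of $R[v]$: because $\mu$ vanishes simultaneously at $y=0$ and $y=2$, the standard one-sided multiplier $v/\mu$ used in the single-degeneracy case is singular on the opposite wall. The resolution is a partition of unity $\chi_0 + \chi_2 + \chi_{\text{int}} = 1$ subordinate to the two walls, together with two one-sided Hardy inequalities keyed to $\p_y \mu(0) > 0$ and $|\p_y \mu(2)| > 0$, which convert weighted $L^2$ control into the required coercivity. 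The resulting estimate bounds $\|u,v\|_\E + \|u,v\|_\Po$ by the natural dual norm of $(f,g)$, and invertibility follows by Galerkin approximation. The constant $c_0(\mu) \lesssim \|\mu'''/\mu\|_{W^{100,\infty}}$ arises through the commutators produced when $\mu''$ is moved across the weighted multiplier and its derivatives are repeatedly absorbed onto $\mu$.

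Third, I would bootstrap from the $\E, \Po$ bounds to an $L^\infty$ bound using the viscous regularization $-\eps \Delta$ and two-dimensional Stokes estimates, thereby controlling the full $X$-norm. The fixed-point map $T$ sends $(u,v)$ to the solution $(\tilde u, \tilde v)$ of the linear problem with right-hand side $\mathcal{N}_1(u,v) + \mathcal{F}_u$, $\mathcal{N}_2(u,v) + \mathcal{F}_v$; since $\mathcal{N}_j$ in (\ref{defn.N}) carries the prefactor $\eps^{3/2+\gamma}$, it is quadratically small in the $X$-norm and $T$ contracts on a small $X$-ball once $\eps$ is taken small enough. Combining the resulting remainder with the constructed profiles gives $\|u^\eps - \mu\|_\infty + \|v^\eps\|_\infty \lesssim \eps$, as claimed.
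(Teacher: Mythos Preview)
Your three-step scheme (profile construction, linear a priori estimates, contraction in $X$) matches the paper's architecture, and the endgame via contraction is exactly what the paper does. The difference is in how the Rayleigh coercivity is obtained. You propose a partition $\chi_0+\chi_2+\chi_{\text{int}}$ together with one-sided $v/\mu$-type multipliers and Hardy inequalities near each wall; the paper instead applies the \emph{unweighted} multiplier $\bold{M}=(-\p_y v,\p_x v)$ to (\ref{sys.1}), which produces $\int u_s|\nabla v|^2$ directly as the positive term and never introduces the $1/\mu$ singularity you are trying to cure. The decomposition near the two walls does appear in the paper, but in the \emph{energy} estimate, where the cross term $\int \mu' uv$ is split via $\chi(y\le \tfrac{1}{10})+\chi(\tfrac{1}{10}\le y\le \tfrac{19}{10})+\chi(y\ge \tfrac{19}{10})$ and handled by a subcritical Hardy argument. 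Your route is plausible (and indeed a $v/\mu_a$ version is sketched in a suppressed draft section of the paper), but the paper's multiplier is cleaner and sidesteps the two-wall issue entirely.

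One further discrepancy: you assert that the forcing $\mathcal{F}_u,\mathcal{F}_v$ can be made $\bigO(\eps^M)$ with $M$ arbitrary. The paper stops at two correctors, so after dividing by $\eps^{3/2+\gamma}$ the forcing is only $\bigO(\eps^{1/2-\gamma})$; closing then requires the careful localized integration-by-parts estimates of Section~\ref{sec.2} (splitting $T_1,T_2$ via $\chi_\delta$ and exploiting $T_2|_{y=0,2}=0$). If you really push the expansion to higher order you can bypass this, but you then need correspondingly more compatibility conditions at the corners and more vanishing of $\p_y^j\mu$, which is why the paper leaves $N_0$ unoptimized rather than claiming arbitrary $M$.
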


Our ultimate interest is motivated by Yudovich's ninth problem, \cite{Yudovich}. Classical experiments starting with Reynolds have shown that unsteady flows in a 2D channel that start near Couette or Poiseulle flow do not converge to these flows. This indicates the existence of infinitely many stationary solutions to Navier-Stokes ``near" Couette or Poiseulle. Establishing the existence of these solutions is an open problem. Our second result, Corollary \ref{Th.couette}, produces stationary solutions sufficiently close to Couette, assuming $x \in [0,L], L << 1$, and a moving boundary at $y = 2$. 

\begin{corollary} \label{Th.couette} Let any $\alpha > 0$ be prescribed, which could depend on $\eps$. Let $\tilde{\mu}$ be prescribed to satisfy the vanishing conditions: $\p_y^k \tilde{\mu}|_{y = 0} = \p_y^k \tilde{\mu}|_{y = 2} = 0$ for $0 \le k \le N_0$. There exists a unique solution, $\bold{u}^{\eps}$ to (\ref{main.NS}) with $u_b = 2$ such that: 
\begin{align}
||u^\eps - \Big( y + \alpha \tilde{\mu}(y) \Big) ||_{\infty} + ||v^\eps||_{\infty} \lesssim \alpha \eps.
\end{align}
\end{corollary}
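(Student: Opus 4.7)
The plan is to apply Theorem \ref{Th.no.slip} directly with the shear profile $\mu(y) := y + \alpha \tilde{\mu}(y)$, and then to track the scaling in $\alpha$ through the constant $c_0(\mu)$.

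First, I would verify that $\mu$ meets the hypotheses (\ref{as.1})--(\ref{as.2}) with $u_b = 2$. The identities $\mu(0) = 0$ and $\mu(2) = 2 = u_b$ follow from $\tilde{\mu}(0) = \tilde{\mu}(2) = 0$. For $2 \le j \le N_0$ we have $\p_y^j y \equiv 0$ and $\p_y^j \tilde{\mu}(0) = \p_y^j \tilde{\mu}(2) = 0$, which gives the higher-order boundary vanishing. Since $\p_y \tilde{\mu}$ also vanishes at $y = 0, 2$, we get $\p_y \mu(0) = 1 > 0$ and $|\p_y \mu(2)| = 1 > 0$ independent of $\alpha$. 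Provided $\alpha$ is restricted so that $\mu(y) > 0$ on $(0, 2]$ (a mild smallness condition that is natural in the $L \ll 1$ regime alluded to in the text preceding the statement), the remaining assumptions of Theorem \ref{Th.no.slip}, including the corner compatibility conditions on the boundary layers, are inherited from the smoothness and boundary flatness of $\tilde{\mu}$.

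Theorem \ref{Th.no.slip} then produces a unique $\bold{u}^\eps$ solving (\ref{main.NS}) with $||u^\eps - \mu||_\infty + ||v^\eps||_\infty \le c_0(\mu) \eps$, where $c_0(\mu) \lesssim ||\mu'''/\mu||_{W^{100,\infty}}$. The crucial step is to show $c_0(\mu) \lesssim \alpha$. Since $y$ is linear, $\mu''' = \alpha \tilde{\mu}'''$, so a factor of $\alpha$ appears at the outset. Near $y = 0$, the vanishing $\tilde{\mu}^{(k)}(0) = 0$ for $0 \le k \le N_0$ yields $\tilde{\mu}'''(y) = O(y^{N_0 - 2})$, while $\mu(y) \sim y$; hence $\mu'''/\mu = O(\alpha y^{N_0 - 3})$, uniformly bounded. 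Near $y = 2$, $\mu \sim 2$ is bounded away from zero, so there is no singularity. Differentiating $\mu'''/\mu$ up to order $100$ via the product rule and using $\tilde{\mu}^{(j)}(y) = O(y^{N_0 + 1 - j})$ near $y = 0$ to absorb the inverse powers of $\mu$ that appear yields $||\mu'''/\mu||_{W^{100,\infty}} \lesssim \alpha$. The claimed bound $||u^\eps - \mu||_\infty + ||v^\eps||_\infty \lesssim \alpha \eps$ follows.

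The main obstacle is this last bookkeeping step: extracting a clean linear factor of $\alpha$ from the weighted $W^{100,\infty}$ norm. This forces $N_0$ in the corollary to be taken large enough (at least of order $100$) to dominate all the inverse powers of $\mu$ produced by the product rule applied to $\mu'''/\mu$, and it simultaneously forces a mild admissibility restriction on $\alpha$ so that $\mu$ remains a strictly positive shear on $(0, 2]$. Both restrictions are consistent with the small-$L$ regime signalled before the statement of the corollary.
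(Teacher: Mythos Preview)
Your approach is exactly the one taken in the paper: apply Theorem \ref{Th.no.slip} with $\mu(y) = y + \alpha \tilde{\mu}(y)$ and observe that $\mu''' = \alpha \tilde{\mu}'''$ forces $c_0(\mu) \lesssim \alpha$. The paper's proof is a two-line sketch that asserts precisely this, so your proposal is correct and simply supplies the details the paper omits; your remarks on the implicit positivity of $\mu$ and on $N_0$ needing to be large enough are reasonable caveats that the paper leaves unstated.
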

\begin{proof}
One can obtain this by applying Theorem \ref{Th.no.slip} with $\mu(y) = y + \alpha \tilde{\mu}(y)$, where $\tilde{\mu}$ vanishes at high order near $y = 0, 2$. In this case, the constant $c_0(\mu) \lesssim \alpha$. 
\end{proof}

\begin{remark} The requirement of $u_b = 2$ is so that the no-slip condition is satisfied by the Couette flow. We do not use this motion of the boundary anywhere in the proof. 
\end{remark}

The present article is structured as follows: the construction of the approximate layers, $u_s, v_s$, in the expansion (\ref{asy.ex.1}) is performed in the Appendix. The main analysis in Sections \ref{sec.1}, \ref{sec.2} is centered around the system (\ref{sys.1}).

\vspace{3 mm}

\noindent \textbf{Acknowledgements:} The authors thank Yan Guo for many useful discussions regarding this problem. 

\section{Linear Estimates} \label{sec.1}

We will analyze the system (\ref{sys.1}). The reader is urged to consult Lemma \ref{Lemma.const.1} for relevant properties of the linearizations, $u_s$, and the forcing terms, $f, g$. 

\subsection{Energy Estimate}

\begin{proposition} For any $\theta > 0$, solutions $[u,v]$ to (\ref{sys.1}) satisfy: 
\begin{align} \label{en.est}
||u,v||_{\mathcal{E}}^2 + ||\sqrt{u_s} \{u, v\}||_{L^2(x = L)}^2 \lesssim C(\theta) \eps^{-\theta} ||u,v||_{\mathcal{P}}^2 + \mathcal{R}_1,
\end{align}

where:
\begin{align}
\mathcal{R}_1 := \int f \cdot u + \int \eps g \cdot v. 
\end{align}
\end{proposition}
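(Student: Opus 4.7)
I would prove (\ref{en.est}) by testing the two equations in (\ref{sys.1}) with the natural multiplier pair $(u,\eps v)$, as suggested by the form of $\mathcal{R}_1$, and integrating over $\Omega$. After integration by parts, the diffusion contributions produce $\eps\|\nabla u\|_{L^2}^2 + \eps^2\|\nabla v\|_{L^2}^2$ plus boundary terms at $\{x=L\}$ that combine with the pressure boundary contributions via the stress-free condition (\ref{bc.sf}); the interior pressure integrals cancel by incompressibility $\partial_x u + \partial_y v = 0$.

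The heart of the estimate is the stretching integral $\int S_u \, u + \eps\int S_v \, v$. The transport pieces $u_s u_x u = \frac{1}{2} u_s(u^2)_x$ and $\frac{\eps}{2}u_s(v^2)_x$ integrate by parts in $x$ to produce the sought outflow term $\frac{1}{2}\|\sqrt{u_s}\{u,\sqrt{\eps}\,v\}\|_{L^2(x=L)}^2$, together with interior contributions in $u_{sx}u^2$ and $\eps u_{sx} v^2$. The companion $v_s \partial_y$ transport pieces integrate by parts in $y$ (with no boundary trace, since $v_s|_{y=0,2}=0$) and, using the approximate incompressibility $u_{sx}+v_{sy}=0$, exactly cancel those $u_{sx}$ interior terms. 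Symmetrically, $u_{sx} u^2$ and $v v_{sy}$ pieces coming from $S_u, S_v$ are handled by incompressibility of $(u_s, v_s)$.

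What remains are the cross terms $\int u_{sy} u v$, $\eps\int v_{sx} uv$, and $\eps\int v_{sy} v^2$, which constitute the main obstacle. These encode the non-self-adjoint part of the Rayleigh operator and must be dominated by a small fraction of $\|u,v\|_{\mathcal{E}}^2$ (absorbable into the LHS) plus $C(\theta)\eps^{-\theta}\|u,v\|_{\mathcal{P}}^2$. I would handle them by a weighted Cauchy--Schwarz of the form $|u_{sy} uv| \le |u_{sy}| \cdot (u/\sqrt{u_s}) \cdot (\sqrt{u_s}\,v)$, combined with a Hardy-type inequality exploiting $u|_{y=0,2} = v|_{y=0,2} = 0$ and $u_s \sim \tilde{y}$ near the walls; splitting the domain at the threshold $\{u_s \simeq \eps^\theta\}$ generates precisely the advertised $\eps^{-\theta}$ loss, since on $\{u_s \ge \eps^\theta\}$ one has $u^2/u_s \le \eps^{-\theta} u^2$ (then a standard Poincaré in $y$), while on the complement the set is narrow enough for a boundary-layer trace estimate to close.

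Finally, to pass from the natural $\eps\|\nabla u\|^2 + \eps^2\|\nabla v\|^2$ to the full $\|u,v\|_{\mathcal{E}}^2$, I would use $\partial_y v = -\partial_x u$ to control $\eps\|\partial_y v\|^2 \le \eps\|\partial_x u\|^2$, and then the same $u_s$-weighted splitting to bound $\eps\|\partial_x v\|^2$ against $\|u,v\|_{\mathcal{P}}^2$ with a further $\eps^{-\theta}$ factor. The principal difficulty throughout is the cross-term analysis of the previous paragraph, where the degeneracy of $u_s$ at the walls forces the Rayleigh weight $\sqrt{u_s}$ into the norm $\mathcal{P}$ and the loss $\eps^{-\theta}$ on the right-hand side.
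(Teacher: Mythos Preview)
Your overall architecture is right (multiply by $(u,v)$-type quantities, extract the $\eps|\nabla u|^2+\eps|\nabla v|^2$ from diffusion via the stress-free condition, pull off the $\tfrac12\int_{x=L}u_s(u^2+v^2)$ outflow from the transport pieces, and reduce to the cross term $\int u_{sy}\,uv$), but the crucial step is underdeveloped and, as written, does not reach the claimed $\eps^{-\theta}$ for arbitrarily small $\theta$.

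\textbf{The gap.} Your treatment of $\int u_{sy}\,uv$ via the splitting $\{u_s\gtrless \eps^\theta\}$ runs into the \emph{critical Hardy obstruction} that the paper explicitly flags. On the thin set $\{u_s\lesssim \eps^\theta\}\sim\{y\lesssim \eps^\theta\}$, the ``boundary-layer trace estimate'' you invoke is not strong enough: the natural bound $\int_{y\le\delta}|u||v|\lesssim \delta^2\|u_y\|_2\|v_y\|_2\le \delta^2\eps^{-1}\|u,v\|_{\mathcal E}^2$ is absorbable only when $\delta\ll\eps^{1/2}$, whereas the outer region contributes $\delta^{-1}\|u,v\|_{\mathcal P}^2$; optimizing yields at best an $\eps^{-1/2}$ loss, not $\eps^{-\theta}$ for small $\theta$. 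Attempting instead to pair $\sqrt{u_s}\,u$ with $v/\sqrt{u_s}$ fails because $\int v^2/y$ is exactly the borderline Hardy integral. The paper handles the leading part $\int \mu'uv$ near $y=0$ by a genuinely different device: write $|\int \mu'uv|\le \|y^{-(1-\omega)/2}v\|_2\,\|y^{(1-\omega)/2}u\|_2$ with a small auxiliary parameter $\omega>0$; the first factor is bounded by $\omega^{-1}\|\sqrt{u_s}\,v_y\|_2$ via a subcritical Hardy computation, while the second is \emph{interpolated} between $\|\sqrt{y}\,u\|_2\lesssim\|\sqrt{u_s}\,u_x\|_2$ and $\|u/y\|_2\lesssim\|u_y\|_2$, producing only an $\eps^{-\theta(\omega)}$ loss with $\theta(\omega)\to 0$ as $\omega\to 0$. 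The higher-order piece $\int (u_{sy}-\mu')uv$ is then handled separately using the profile smallness $|u_{sy}-\mu'|\lesssim\sqrt\eps$ and a cutoff at scale $\eps^{1/4}$. Your single splitting does not capture this two-scale structure.

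\textbf{A secondary point.} The paper in fact tests with $(u,v)$, not $(u,\eps v)$: this already produces the full $\eps\|\nabla v\|_2^2$ from the Laplacian terms (the pressure and mixed boundary contributions combine via (\ref{bc.sf}) to give $\eps\int(|\nabla u|^2+|\nabla v|^2)$ directly), so your final ``upgrade'' step from $\eps^2\|\nabla v\|^2$ to $\eps\|\nabla v\|^2$ is unnecessary. That upgrade, incidentally, would face the same critical-Hardy issue for $\eps\|\partial_x v\|^2$ on the thin set.
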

\begin{proof}

Apply $[u,v]$ to (\ref{sys.1}). The coercive quantities are: 
\begin{align} \n
\int -\eps \Delta u \times u &- \int \eps \Delta v \times v + \int \nabla P \cdot \bold{u} \\ \n
& = \int \eps \Big[ - \p_{yy}u - 2 \p_{xx}u -  \p_{xy} v \Big] \times u + \int \p_x P u \\ \n
& \hspace{2 mm} + \int \eps \Big[ - 2\p_{yy}v - \p_x \{\p_y u + \p_x v \} \Big] \times v + \int \p_y P v \\ \n
& = \int \eps \Big[ |\p_y u^2 +  |\p_x v|^2 + 4 |\p_y v|^2 + 2\p_{x}v \p_y u \Big] \\
& \gtrsim \int \eps \Big[ |\nabla u|^2 + |\nabla v|^2 \Big].
\end{align}

Above, we have used the stress-free boundary condition in (\ref{bc.sf}). We now have the convection terms: 
\begin{align}
&\int [u_s u_x + u_{sx}u + v_s u_y] \cdot u= \int u_{sx} u^2 + \frac{1}{2} \int_{x = L} u_s u^2, \\
&\int [u_s v_x + v_s v_y + v_{sy}v ] \cdot v = \int v_{sy} v^2 + \frac{1}{2} \int_{x = L} u_s v^2.
\end{align}

We estimate the two bulk terms above using (\ref{prof.est.1}) - (\ref{prof.est.3}):
\begin{align}
|\int u_{sx}u^2| + |\int v_{sy} v^2| \le |\int \sqrt{\eps} \tilde{y} [u^2 + v^2]| \le \sqrt{\eps} \bigO(L) ||\sqrt{u_s} \nabla v||_2^2.
\end{align}

We now move to: 
\begin{align}
|\int v_{sx}uv| \le \eps ||\sqrt{\tilde{y}}u_x||_2 ||\sqrt{\tilde{y}}v_x||_2,
\end{align}

again by using (\ref{prof.est.3}). For the $u_{sy}v$ convection term, we first handle the leading order contribution from $\mu$, and we must take care to avoid the critical Hardy inequality: 
\begin{align} \n
|\int \mu' uv| &= |\int \mu' uv \Big[\chi(y \le \frac{1}{10}) + \chi(\frac{1}{10} \le y \le \frac{19}{10}) + \chi(y \ge \frac{19}{10})\Big] |.
\end{align}

For the interior contributions: 
\begin{align}
|\int \mu' uv \chi(\frac{1}{10} \le y \le \frac{19}{10})| \le \bigO(L) ||\sqrt{u_s} \p_x u||_2 ||\sqrt{u_s} \p_x v||_2 
\end{align}

The $y \le \frac{1}{10}$ contribution is exactly analogous to the $y \ge \frac{1}{10}$, and so we treat the former. Let $\tilde{\chi}$ denote a fattened relative to $\chi(y \le \frac{1}{10})$. Fix an $\omega > 0$ small.  
\begin{align} \label{fat.chi}
|\int \mu' uv \chi(y \le \frac{1}{10})| \le ||\mu'||_\infty  ||y^{-(\frac{1}{2}-\frac{\omega}{2})} v \tilde{\chi}||_2 || y^{\frac{1}{2}-\frac{\omega}{2}} u \tilde{\chi}||_2
\end{align}

We estimate each $L^2$ term above individually. 
\begin{align} \n
\int y^{-1+ \omega} v^2 \tilde{\chi} &= \int \frac{\p_y}{\omega} \{ y^{\omega} \} v^2 \tilde{\chi} \\ \n
& = - \int \frac{y^\omega}{\omega} 2v \p_y v \tilde{\chi} - \int \frac{y^{\omega}}{\omega} v^2 \tilde{\chi}' \\ \label{Ka.1}
& \le \frac{1}{\omega} ||y^{-(\frac{1}{2}-\frac{\omega}{2})} v \tilde{\chi}||_2 || \sqrt{u_s} \p_y v||_2 + \frac{1}{\omega} \bigO(L) ||\sqrt{u_s} \p_x v||_2^2. 
\end{align}

Next from (\ref{fat.chi}):
\begin{align} \n
||y^{\frac{1}{2}-\frac{\omega}{2}}u \tilde{\chi}||_2 &\lesssim ||y^{\frac{1}{2}}u \tilde{\chi}||_2^{1-\theta(\omega)} ||\frac{u}{y} \tilde{\chi}||_2^{\theta(\omega)} \\ \label{Ka.2}
& \lesssim \eps^{-\theta(\omega)} || \sqrt{u_s} \p_x u||_2^{1-\theta(\omega)}  \Big[ || \sqrt{\eps} \p_y u \tilde{\chi}||_2 + ||\sqrt{u_s} \p_x u \tilde{\chi}||_2 \Big]^{\theta(\omega)}
\end{align}

Inserting (\ref{Ka.1}) and (\ref{Ka.2}) into (\ref{fat.chi}), one obtains for small $\kappa > 0$
\begin{align}
|(\ref{fat.chi})| \le \kappa ||u,v||_{\mathcal{E}}^2 + N_{\kappa} \eps^{0-} ||u,v||_{\mathcal{P}}^2. 
\end{align}

For the higher-order contributions, we use the estimate (\ref{prof.est.2}), and subsequently split:
\begin{align} \label{conv.E}
|\int [u_{sy} - \mu'] uv| \le \int \sqrt{\eps}|u||v| [\chi_\delta^- + \chi_\delta^c + \chi_\delta^+] = (\ref{conv.E}.1) + (\ref{conv.E}.2) + (\ref{conv.E}.3). 
\end{align}

Here, $\chi_\delta^+(y) = \chi_\delta^-(2-y)$ and $\chi_\delta^c = 1 - \chi_\delta^+ - \chi_\delta^-$, where:
\begin{align}
\chi_\delta^-(y) = 
\left\{
\begin{aligned}
&1 \text{ on } y \le \delta \\
&0 \text{ on } y \ge 2\delta
\end{aligned}
\right. 
\end{align}

Terms (\ref{conv.E}.1) and (\ref{conv.E}.3) are identical.  We estimate:
\begin{align} \label{ins.0}
&|(\ref{conv.E}.1)| \lesssim \sqrt{\delta} \bigO(L) ||\sqrt{\eps}\frac{u}{y} \sqrt{\tilde{\chi}_\delta}||_2 ||\sqrt{u_s} u_x||_2, \\
&|(\ref{conv.E}.2)| \lesssim \frac{\sqrt{\eps}}{\delta} ||\sqrt{u_s} \nabla v||_2^2.
\end{align}

We now estimate:
\begin{align} \n
||\sqrt{\eps} \frac{u}{y} \sqrt{\tilde{\chi}_\delta}||_2^2 &= \int \eps \frac{\p_y}{-1} \{ y^{-1} \} u^2 \tilde{\chi}_\delta \\ \n
& = \int \eps y^{-1} 2u\p_y u \tilde{\chi}_\delta + \int \eps y^{-1} u^2 \frac{\tilde{\chi}_\delta'}{\delta} \\ \label{ins.1}
& \le  ||\sqrt{\eps} \frac{u}{y} \sqrt{\tilde{\chi}_\delta} ||_2 ||\sqrt{\eps} \p_y u||_2 + \frac{\eps}{\delta^3} ||\sqrt{u_s} \p_x u||_2^2. 
\end{align}

We may thus take $\delta = \eps^{\frac{1}{4}}$ and insert (\ref{ins.1}) into (\ref{ins.0}) to conclude. 

\end{proof}

\subsection{Positivity Estimate}
\begin{proposition} Solutions $[u,v]$ to (\ref{sys.1}) satisfy, for any $\kappa > 0$:
\begin{align} \label{pos.est}
||u,v||_{\mathcal{P}}^2 +||  \sqrt{\eps} \p_x v||_{L^2(x = 0)}^2 + ||\sqrt{\eps} \p_x u||_{L^2(x = L)}^2 \lesssim \eps^{1-\kappa} ||u,v||_{\E}^2 + \mathcal{R}_2,
\end{align}

where: 
\begin{align} \label{R2.defn.}
\mathcal{R}_2 := \int f \cdot - \p_y v + \int g \cdot \p_x v. 
\end{align}
\end{proposition}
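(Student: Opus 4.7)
The plan is to test (\ref{sys.1}) against the Rayleigh-type positivity multiplier $(-\p_y v,\,\p_x v)$, which by incompressibility equals $(\p_x u,\,\p_x v) = \p_x \bold{u}$. The principal positive term comes from the leading shear convection: using $u_x = -v_y$,
\begin{align*}
-\int u_s u_x \p_y v + \int u_s v_x \p_x v = \int u_s v_y \p_y v + \int u_s v_x \p_x v = \int u_s |\nabla v|^2 = ||u,v||_\Po^2,
\end{align*}
which is the main left-hand side quantity.

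For the viscous contributions I would integrate by parts, repeatedly substituting $u_x = -v_y$ to reduce each bulk integral to a perfect derivative $\tfrac{\eps}{2}\int \p_x(\cdot)^2$. The boundary conditions (\ref{bc.d})-(\ref{bc.sf}) then play a precise role: $u|_{x=0}=v|_{x=0}=0$ gives $u_y|_{x=0}=v_y|_{x=0}=u_x|_{x=0}=0$, while $v|_{y=0,2}=0$ gives $u_x|_{y=0,2}=v_x|_{y=0,2}=0$, and the stress-free condition at $x=L$ yields $v_x|_{x=L}=-u_y|_{x=L}$. Assembling, the viscous part reduces to the boundary traces $-\tfrac{\eps}{2}u_x^2|_{x=L}+\tfrac{\eps}{2}v_x^2|_{x=0}+\tfrac{\eps}{2}v_y^2|_{x=L}$. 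Since $v_y|_{x=L} = -u_x|_{x=L}$ by divergence-free, the two $u_x^2|_{x=L}$ contributions cancel, leaving $\tfrac{\eps}{2}v_x^2|_{x=0}$. The pressure integral has no bulk (via $v_{xy}=v_{yx}$) but picks up a single boundary trace at $x=L$ equal to $2\eps u_x^2|_{x=L}$, using $P|_{x=L}=2\eps u_x$ and $\p_y v|_{x=L}=-u_x|_{x=L}$. Together these reproduce the announced $||\sqrt{\eps}\p_x u||_{L^2(x=L)}^2+||\sqrt{\eps}\p_x v||_{L^2(x=0)}^2$ on the left of (\ref{pos.est}).

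The non-principal convection pieces $u_{sx}u, u_{sy}v, v_s u_y$ (against $-\p_y v$) and $v_{sx}u, v_s v_y, v_{sy}v$ (against $\p_x v$) are error terms. Most carry a small prefactor: by (\ref{prof.est.1})-(\ref{prof.est.3}), $u_{sx}$ and $v_s$ are $\bigO(\sqrt{\eps}\tilde y)$ or $\bigO(\eps)$, and Cauchy-Schwarz (with a Poincaré step in $x$ via $v|_{x=0}=0$) lets them be absorbed into $\eps^{1-\kappa}||u,v||_\E^2$. The genuinely delicate piece is the leading $\int \mu'\,v\,\p_y v$, for which $\mu'$ does not vanish at the wall; I would integrate by parts in $y$ (boundary clean since $v|_{y=0,2}=0$) to convert it to $-\tfrac{1}{2}\int \mu'' v^2$, then invoke the high-order vanishing (\ref{as.2}) to conclude $\mu''/\mu \in L^\infty$, whence $|\int \mu'' v^2|\lesssim \int u_s v^2 \lesssim L^2\|u,v\|_\Po^2$ by Poincaré. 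The forcing contribution is exactly $\int f(-\p_y v)+\int g\,\p_x v=\mathcal{R}_2$, completing (\ref{pos.est}).

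The main obstacle, as in the energy estimate, is expected to be the coupling between $v_s u_y$-type cross-terms and the critical weight $\mu$ near $y=0,2$. The small loss $\eps^{-\kappa}$ on the right-hand side should enter exactly through an interpolation analogous to (\ref{Ka.1})-(\ref{Ka.2}), trading a negative power of $\eps$ for a fraction of the viscous energy $||u,v||_\E^2$ to close the estimate.
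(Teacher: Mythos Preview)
Your overall strategy matches the paper exactly: test against $(-\p_y v,\p_x v)$, extract the Rayleigh positivity $\int u_s|\nabla v|^2$, and obtain the two boundary traces from the viscous and pressure contributions via the stress-free condition. The viscous/pressure bookkeeping you outline is correct.

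There is, however, a real gap in your handling of $u_{sy}v\cdot(-\p_y v)$. You split $u_{sy}=\mu'+(u_{sy}-\mu')$, integrate by parts only on the $\mu'$ piece, and use $\mu''/\mu\in L^\infty$ to get $|\int\mu'' v^2|\lesssim\int u_s v^2\lesssim L^2\|u,v\|_\Po^2$. That step is fine, but the remainder $(u_{sy}-\mu')v\,v_y$ carries only a $\sqrt{\eps}$ prefactor from (\ref{prof.est.2}), and this is \emph{not} enough to place it in $\eps^{1-\kappa}\|u,v\|_\E^2$ by Cauchy--Schwarz plus Poincar\'e: you would get $\sqrt{\eps}\,\|v\|_2\|v_y\|_2$ with $\|v_y\|_2\sim\eps^{-1/2}\|u,v\|_\E$, and no power of $\eps$ survives. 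The paper instead keeps the \emph{full} $u_{sy}v$ in the Rayleigh block, integrates by parts once to produce $\tfrac12\int u_{syy}v^2$, and then --- since $\|u_{syy}\|_\infty$ is only $O(1)$ (the boundary-layer contributions to $u_{syy}$ are not dominated by $u_s$, so your $\mu''/\mu$ shortcut does not extend) --- proves the \emph{unweighted} bound $\int v^2\lesssim L\,\|u,v\|_\Po^2$ via a near/far splitting $\chi_\delta+\chi_\delta^c$, writing $1=\p_y\{y\}$ on the localized piece. Your weighted Poincar\'e $\int u_s v^2\lesssim L^2\|u,v\|_\Po^2$ is correct but strictly weaker than what is needed here.

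A smaller point: the $\eps^{-\kappa}$ in (\ref{pos.est}) does not arise from an interpolation of the type (\ref{Ka.1})--(\ref{Ka.2}). In the paper's argument it is simply a Young-inequality parameter used on the lower-order profile terms (those carrying $v_s$, $u_{sx}$, $v_{sx}$, $v_{sy}$), which produce cross terms of size $\sqrt{\eps}\,\|u,v\|_\E\|u,v\|_\Po$ and are split as $\eps^{\kappa}\|u,v\|_\Po^2+\eps^{1-\kappa}\|u,v\|_\E^2$.
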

\begin{proof}

We will apply the multiplier $\bold{M} := (-\p_y v, \p_x v)$ to the system (\ref{sys.1}). This gives: 
\begin{align} \n
\int \Big(-u_s \p_y v &+ v \p_y u_s \Big) \cdot -\p_y v + \int u_s \p_x v \cdot \p_x v \\ \n
& = \int  u_s \Big[ |\p_y v|^2 + |\p_x v|^2 \Big] + \int \frac{u_{syy}}{2}v^2 \\ \n
& \ge \int u_s |\nabla v|^2 - ||\frac{u_{syy}}{2}||_{\infty} \int v^2 \\
& \gtrsim \int u_s |\nabla v|^2. 
\end{align}

We have used the splitting: 
\begin{align} \n
\int v^2 &= \int v^2 [\chi_\delta + \chi_\delta^c] \\ \n
& \le \frac{L^2}{\delta} \int |\p_x v|^2 + |\int \p_y \{ y \} v^2 \chi_\delta | \\ \n
& \le \frac{L^2}{\delta} \int u_s |\p_x v|^2 + |\int y 2v \p_y v \chi_\delta | + |\int y v^2 \frac{\chi_\delta'}{\delta}| \\ \n
& \lesssim \frac{L^2}{\delta} \int u_s |\p_x v|^2 + \sqrt{\delta} || \sqrt{u_s} \p_y v||_2 \times \sqrt{\text{LHS}} \\ 
& \stackrel{\delta = L}{=} L \int u_s |\p_x v|^2 + \sqrt{L} || \sqrt{u_s} \p_y v||_2 \times \sqrt{\text{LHS}}.
\end{align}

For the vorticity terms, repeated integration by parts gives: 
\begin{align} \n
\int -\eps \Delta u \cdot \p_y v &- \int \eps \Delta v \cdot \p_x v + \int \nabla P \cdot \bold{M} \\ \n
& = \frac{\eps}{2} \int_{x = 0} |\p_x v|^2 + \frac{\eps}{2} \Big[ \int_{x = L} \Big( |\p_y u|^2 - |\p_x u|^2 + |\p_y v|^2 \\ \n &\hspace{10 mm} - |\p_x v|^2   \Big) \Big] + \int_{x = L} P \p_x u \\
& = \frac{\eps}{2} \int_{x = 0} |\p_x v|^2 + \int_{x = L} 2 \eps |\p_x u|^2,
\end{align}

where we have used the Stress-Free boundary condition from (\ref{bc.sf}). We now come to the remaining linearized terms from (\ref{sys.1}):
\begin{align} \n
|\int \Big( u \p_x u_s + v_s \p_y u \Big) &\cdot -\p_y v| + |\int \Big( u \p_x v_s + v_s \p_y v + v \p_y v_s \Big) \cdot \p_x v| \\ \n
&\lesssim \eps^\kappa \times \text{LHS of (\ref{pos.est})} + \eps^{1-\kappa} ||u,v||_{\mathcal{E}}^2,
\end{align}

where we have used the Poincare inequality and the estimates in (\ref{prof.est.1}) - (\ref{prof.est.3}). Finally, the right-hand side of (\ref{pos.est}) follows from the definition of $\mathcal{R}_2$.

\end{proof}

\begin{lemma} For any $\theta > 0$,
\begin{align} \label{unif.estimate}
\eps^{\theta} ||\sqrt{\eps}u, \sqrt{\eps}v||_\infty \le C_\theta \Big[ ||u,v||_{\E} + ||f,g||_{2} \Big].
\end{align}
\end{lemma}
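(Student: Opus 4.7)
The plan is to interpolate $L^\infty$ between $H^1$, which is already controlled by the energy $||u,v||_\E$, and $H^2$, which I will recover from the equation via Stokes regularity. In two dimensions one has the continuous embedding $H^{1+\delta}(\Omega)\hookrightarrow L^\infty(\Omega)$ for every $\delta\in(0,1)$, together with the interpolation $||w||_{H^{1+\delta}}\lesssim ||w||_{H^1}^{1-\delta}||w||_{H^2}^{\delta}$. Applied with $w=\sqrt{\eps}\,u$ and $w=\sqrt{\eps}\,v$, the lemma reduces to bounding these two norms.

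The $H^1$ bound is immediate: since $u,v$ vanish on three of the four sides of $\p\Omega$, Poincar\'e gives $||u,v||_2\lesssim ||\nabla u,\nabla v||_2$, hence $\sqrt{\eps}||u,v||_{H^1}\lesssim ||u,v||_\E$ directly from the definition of $\E$. For the $H^2$ bound, I would read (\ref{sys.1}) as the steady Stokes problem at viscosity $\eps$ with right-hand side $(f-S_u,\,g-S_v)$ and the mixed boundary conditions (\ref{bc.d})--(\ref{bc.sf}). Stokes regularity then yields $\eps||u,v||_{H^2}+||P||_{H^1}\lesssim ||f,g||_2+||S_u,S_v||_2$. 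The $S$-terms are linear in $u,v$ with coefficients bounded in $W^{1,\infty}$ by Lemma \ref{Lemma.const.1}, so $||S_u,S_v||_2\lesssim ||u,v||_{H^1}\lesssim \eps^{-1/2}||u,v||_\E$, and consequently $\sqrt{\eps}||u,v||_{H^2}\lesssim \eps^{-1}||u,v||_\E+\eps^{-1/2}||f,g||_2$.

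Inserting both bounds into the interpolation, pulling out the factor $\eps^{-\delta}$, and applying Young's inequality produces $\sqrt{\eps}||u,v||_\infty\lesssim C_\delta\,\eps^{-\delta}\,(||u,v||_\E+||f,g||_2)$, which is the statement with $\theta=\delta$. The principal technical obstacle is justifying the Stokes $H^2$-estimate up to the two corners $\{x=L,\,y\in\{0,2\}\}$, where the no-slip and stress-free boundary conditions meet. I would handle this by an even/odd reflection of $(u,v,P)$ across $\{x=L\}$, which is compatible both with $\p_y u+\p_x v=0$ at $\{x=L\}$ and with the vanishing of $v$ (respectively $u$) along $y=0,2$; this reduces the boundary estimate near the mixed corners to an interior Stokes estimate on a slightly enlarged rectangle, to which classical theory applies.
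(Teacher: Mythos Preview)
Your proposal is correct and follows essentially the same strategy the paper invokes: the paper omits the proof entirely and cites \cite{GN}, stating only that it proceeds ``using interpolation arguments and estimates for the Stokes operator on domains with corners,'' which is precisely your plan of interpolating $L^\infty$ between the $H^1$ energy and the $H^2$ Stokes bound, with the corner issue at $\{x=L\}$ handled by reflection. Your identification of the mixed-corner regularity as the main technical point, and the even/odd reflection compatible with the stress-free condition (\ref{bc.sf}), matches what is done in \cite{GN}.
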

\begin{proof}
We omit the proof, this is found in \cite{GN} using interpolation arguments and estimates for the Stokes operator on domains with corners. 
\end{proof}

As a direct corollary to (\ref{en.est}), (\ref{pos.est}), and taking $\theta = \frac{\gamma}{4}$ in (\ref{unif.estimate}):
\begin{corollary}
\begin{align}
||u,v||_X^2 \lesssim \mathcal{R}_1 + \mathcal{R}_2 + \eps^{\frac{\gamma}{2}} ||f,g||_{2}^2.
\end{align}
\end{corollary}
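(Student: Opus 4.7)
The plan is to combine the two linear bounds (\ref{en.est}) and (\ref{pos.est}) in a short bootstrap, then promote the resulting $\E$-norm control to the full $X$-norm via the uniform estimate (\ref{unif.estimate}).

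First, I would substitute (\ref{pos.est}) into the $\Po$-term on the right-hand side of (\ref{en.est}) to obtain
\begin{align*}
||u,v||_\E^2 \lesssim C(\theta)\,\eps^{-\theta}\bigl[\eps^{1-\kappa}||u,v||_\E^2 + \mathcal{R}_2\bigr] + \mathcal{R}_1.
\end{align*}
Choosing $\theta, \kappa > 0$ with $\theta + \kappa < 1$ and taking $\eps$ sufficiently small, the prefactor $C(\theta)\,\eps^{1-\theta-\kappa}$ of $||u,v||_\E^2$ on the right becomes arbitrarily small and can be absorbed into the left-hand side. Feeding the resulting bound back into (\ref{pos.est}) yields analogous control for $||u,v||_\Po^2$. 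Following the convention used throughout the paper, the residual $\eps^{-\theta}$ loss multiplying $\mathcal{R}_2$ is swept into the implicit $\lesssim$ constant by taking $\theta$ arbitrarily small.

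Second, I would invoke the uniform estimate (\ref{unif.estimate}) with the prescribed choice $\theta = \gamma/4$, namely $\eps^{\gamma/4}||\sqrt{\eps}\{u,v\}||_\infty \lesssim ||u,v||_\E + ||f,g||_{2}$. Squaring this inequality and multiplying by $\eps^{\gamma/2}$, together with the trivial consequence $||u,v||_X^2 \lesssim ||u,v||_\E^2 + \eps^\gamma ||\sqrt{\eps}\{u,v\}||_\infty^2$ of the definition of the $X$-norm, gives
\begin{align*}
||u,v||_X^2 \lesssim ||u,v||_\E^2 + \eps^{\gamma/2}\bigl(||u,v||_\E^2 + ||f,g||_2^2\bigr) \lesssim ||u,v||_\E^2 + \eps^{\gamma/2}||f,g||_2^2.
\end{align*}
Combining with the conclusion of the first step produces the stated bound.

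Since the corollary is advertised as direct, there is no substantive obstruction. The only point requiring care is the $\eps$-bookkeeping: the $\eps^{-\theta}$ loss built into (\ref{en.est}) must be strictly compensated by the $\eps^{1-\kappa}$ gain coming from (\ref{pos.est}) in order to close the bootstrap on $||u,v||_\E^2$, which is why both $\theta$ and $\kappa$ are taken positive and small, with $\theta + \kappa < 1$.
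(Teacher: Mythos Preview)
Your proposal is correct and follows precisely the approach the paper intends: bootstrap (\ref{pos.est}) into (\ref{en.est}) with $\theta+\kappa<1$ to close on $||u,v||_\E^2$, then apply (\ref{unif.estimate}) with $\theta=\gamma/4$ to upgrade to the full $X$-norm. The paper records this only as a one-line ``direct corollary'' without details, so your write-up is in fact more explicit than the original.
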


\section{Evaluation of Right-Hand Sides} \label{sec.2}

We first provide the nonlinear estimates:
\begin{lemma} With $\mathcal{N}_1, \mathcal{N}_2$ defined as in (\ref{defn.N}):
\begin{align} \n
|\int \eps^{\frac{3}{2}+\gamma} \mathcal{N}_1 \cdot [u + \p_x u]| &+ |\int \eps^{\frac{3}{2}+\gamma} \mathcal{N}_2 \cdot [v + \p_x v]| \\ &+ \eps^{\frac{\gamma}{4}}||\mathcal{N}_1, \mathcal{N}_2||_2 \le \eps^{\frac{\gamma}{2}} \Big[||u,v||_{X}^3 + ||u,v||_{X}^2\Big].
\end{align}
\end{lemma}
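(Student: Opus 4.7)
The plan is to reduce each of the three quantities on the left to a product of one $L^\infty$-norm and one or two $L^2$-norms of $(u,v)$ or $\nabla(u,v)$, and then cash in the bounds encoded in $\|u,v\|_X$. Specifically, from the definition of the norms,
\begin{align*}
\|\{u,v\}\|_{L^\infty} &\le \eps^{-\frac{1}{2}-\frac{\gamma}{2}}\|u,v\|_X, \\
\|\nabla u\|_{L^2}+\|\nabla v\|_{L^2} &\le \eps^{-\frac{1}{2}}\|u,v\|_X,
\end{align*}
and, since $u$ and $v$ vanish on $\{y=0\}\cup\{y=2\}$, the one-dimensional Poincar\'e inequality in $y$ gives
$$\|u\|_{L^2}+\|v\|_{L^2}\ \lesssim\ \|\p_y u\|_{L^2}+\|\p_y v\|_{L^2}\ \lesssim\ \eps^{-\frac{1}{2}}\|u,v\|_X.$$
These are the only ingredients I will use; the rest is careful bookkeeping of powers of $\eps$.

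For the trilinear pairings, insert the definition $\mathcal{N}_1=-\eps^{\frac{3}{2}+\gamma}(uu_x+vu_y)$ and apply H\"older with the splitting $L^\infty\cdot L^2\cdot L^2$. For the test function $u$ I would use $L^\infty$ on the coefficient $u$ or $v$, $L^2$ on the derivative $\p_x u$ or $\p_y u$, and $L^2$ on the remaining factor $u$ (controlled by Poincar\'e above). For the test function $\p_x u$ I would instead distribute as $L^\infty \cdot L^2 \cdot L^2$ with the two $L^2$'s landing on first-order derivatives. Both distributions yield the same $\eps$-budget
$$\bigl|\!\int\!\eps^{\frac{3}{2}+\gamma}\mathcal{N}_1\cdot[u+\p_x u]\bigr|\ \lesssim\ \eps^{\frac{3}{2}+\gamma}\cdot\eps^{-\frac{1}{2}-\frac{\gamma}{2}}\cdot\eps^{-\frac{1}{2}}\cdot\eps^{-\frac{1}{2}}\,\|u,v\|_X^3\ =\ \eps^{\frac{\gamma}{2}}\|u,v\|_X^3,$$
and the same accounting treats $\mathcal{N}_2\cdot[v+\p_x v]$ with $v$ or $\p_x v$ in place of $u$ or $\p_x u$.

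For the $L^2$-term, a single application of H\"older ($L^\infty\cdot L^2$) gives
$$\|\mathcal{N}_1\|_{L^2}\ \lesssim\ \eps^{\frac{3}{2}+\gamma}\bigl(\|u\|_\infty+\|v\|_\infty\bigr)\bigl(\|u_x\|_2+\|u_y\|_2\bigr)\ \lesssim\ \eps^{\frac{3}{2}+\gamma-\frac{1}{2}-\frac{\gamma}{2}-\frac{1}{2}}\,\|u,v\|_X^2\ =\ \eps^{\frac{1}{2}+\frac{\gamma}{2}}\|u,v\|_X^2,$$
so multiplying by $\eps^{\gamma/4}$ gives a factor of $\eps^{\frac{1}{2}+\frac{3\gamma}{4}}$, which is dominated by $\eps^{\gamma/2}$ for $\eps$ small. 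An identical estimate applies to $\mathcal{N}_2$. Summing the three contributions yields the lemma.

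I do not anticipate any genuine obstacle: the only sensitive step is ensuring the $\eps^{\gamma/2}$ gain is not lost, which is why I invest the extra $\eps^{\gamma/2}$ sitting on the $L^\infty$-component of the $X$-norm into the trilinear $L^\infty\cdot L^2\cdot L^2$ pairing, leaving precisely the factor $\eps^{\gamma/2}$ advertised on the right. If one wished to avoid the $L^\infty$-norm entirely, the 2D Ladyzhenskaya interpolation $\|u\|_{L^4}^2\lesssim\|u\|_{L^2}\|\nabla u\|_{L^2}$ would give the same result, but since the $X$-norm is set up to include $L^\infty$ the direct H\"older route above is cleanest.
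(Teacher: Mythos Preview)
Your proof is correct and follows essentially the same approach as the paper: a direct H\"older splitting $L^\infty\cdot L^2\cdot L^2$ on the trilinear pairings, with the $L^\infty$ factor absorbing the weighted sup-norm component of $\|u,v\|_X$ and the two $L^2$ factors absorbing $\|\sqrt{\eps}\nabla\{u,v\}\|_2$. The paper's proof is terser and does not spell out the Poincar\'e step for the undifferentiated test functions $u,v$, but your explicit invocation of it is exactly what is needed; otherwise the arguments are identical in structure and in the $\eps$-bookkeeping.
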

\begin{proof}

We compute directly:
\begin{align} \n
|\int \eps^{\frac{3}{2}+\gamma} [u\p_x u &+ v\p_y u]  \cdot [u + \p_x u]| \\ &\le \eps^{\frac{\gamma}{2}} ||\sqrt{\eps} \eps^{\frac{\gamma}{2}} \{u, v \}||_{\infty} ||\sqrt{\eps} \nabla \{u, v \}||_2^2 \lesssim \eps^{\frac{\gamma}{2}} ||u,v||_X^3. 
\end{align}

Similarly:
\begin{align} \n
|\int \eps^{\frac{3}{2}+\gamma} [u\p_x v &+ v\p_y v] \cdot [v + \p_x v]| \\ &\le \eps^{\frac{\gamma}{2}} ||\sqrt{\eps} \eps^{\frac{\gamma}{2}} \{u, v \}||_{\infty} ||\sqrt{\eps} \nabla \{u, v \}||_2^2 \lesssim \eps^{\frac{\gamma}{2}} ||u,v||_X^3. 
\end{align}

Finally: 
\begin{align}
||\mathcal{N}_1, \mathcal{N}_2||_2 \le \eps^{\frac{3}{2}+\gamma} \Big[ ||u \{ \p_x u, \p_x v \}||_2 + ||v \{ \p_y u, \p_y v \} ||_2 \Big]. 
\end{align}

\end{proof}

\begin{lemma} With $\mathcal{F}_u, \mathcal{F}_v$ defined as in (\ref{Fu.Fv}), for any $\delta > 0$:
\begin{align} \n
&\Big|\int \eps^{-\frac{3}{2}-\gamma} \mathcal{F}_{u} \cdot [u + \p_x u] + \int \eps^{-\frac{3}{2}-\gamma} \mathcal{F}_v \cdot [v + \p_x v] \Big| \\ \n
& \hspace{30 mm} \le \delta ||u,v||_{X}^2 + C(u_s, v_s)c_0(\frac{\mu'''}{\mu}), \\ \label{sd.1}
&\eps^{\frac{\gamma}{4}}||\eps^{-\frac{3}{2}-\gamma} \{ \mathcal{F}_u, \mathcal{F}_v \}||_2 \lesssim \eps^{\frac{1}{2}-\frac{3\gamma}{4}} c_0(\frac{\mu'''}{\mu}).
\end{align}
\end{lemma}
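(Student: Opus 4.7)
The plan is to start by unpacking $\mathcal{F}_u, \mathcal{F}_v$ from their definition in (\ref{Fu.Fv}). These are the residuals produced when the truncated ansatz $(u_s, v_s)$ from (\ref{asy.ex.1}) is substituted into (\ref{main.NS}); after the Euler and Prandtl subsystems at orders $\eps$ and $\eps^{3/2}$ have been used to cancel their respective pieces, what remains is a sum of higher-order monomials in $\eps$ multiplying products of profile derivatives $\p^\alpha u^1_e, \p^\alpha u^1_p, \p^\alpha u^2_e, \p^\alpha u^2_p$, and, critically, terms of the form $\eps^2 \mu''' \cdot (\dots)$ arising from the viscous term $\eps \Delta$ applied to $\mu$ and to the Prandtl correctors. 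The factor $c_0\!\left(\frac{\mu'''}{\mu}\right)$ in the bound is produced when the Prandtl transport operator (whose principal coefficient is $\mu$) is inverted against these forcings; this is recorded in the profile estimates of the Appendix.

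Next I would group the forcing terms by their effective $\eps$-order. After division by $\eps^{3/2+\gamma}$, the leading term is of size $\eps^{1/2-\gamma}$ in $L^2$, producing bound (\ref{sd.1}) directly: for each monomial one records the explicit $\eps$ prefactor, and then uses the $L^2_y$ estimates on $\p^\alpha u^j_e, \p^\alpha u^j_p$ from the profile construction; boundary-layer profiles contribute an extra $\eps^{1/4}$ after the change of variables $Y = y/\sqrt{\eps}$, which is more than enough to absorb the dependence on $\gamma$. Multiplying by $\eps^{\gamma/4}$ then yields the claim.

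For the first (weighted) estimate, I would pair each piece of $\eps^{-3/2-\gamma}\mathcal{F}_u$ against $u$ or $\p_x u$ (and similarly for $\mathcal{F}_v$), using Cauchy--Schwarz in the form
\begin{align*}
\Big|\int \eps^{-3/2-\gamma} \mathcal{F}_u \cdot \p_x u\Big| \le \eps^{-\gamma/2}\|\eps^{-3/2-\gamma}\mathcal{F}_u\|_2 \cdot \|\sqrt{\eps}\,\p_x u\|_2 \cdot \eps^{something},
\end{align*}
with a Poincar\'e inequality (using $u|_{x=0}=0$) handling the pairing against $u$ itself. Young's inequality then splits off $\delta \|u,v\|_{\mathcal{E}}^2 \le \delta \|u,v\|_X^2$ and leaves a profile-dependent constant, which collects into $C(u_s, v_s)c_0(\mu'''/\mu)$. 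The main obstacle is a handful of near-boundary Prandtl terms whose $y$-support is too thin for naive Cauchy--Schwarz to give the needed $\eps$ power; for these, I would use a weighted Hardy argument identical in spirit to (\ref{fat.chi})--(\ref{Ka.2}), trading $y^{-1}$ against $\|\sqrt{\eps}\,\p_y u\|_2$ at the cost of a harmless $\eps^{0-}$ factor, which is absorbed into $\delta \|u,v\|_X^2$.

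The hard step is really bookkeeping: writing $\mathcal{F}_u, \mathcal{F}_v$ explicitly and verifying that \emph{every} summand of the residual has sufficient smallness in $\eps$ after accounting for the Prandtl stretching, together with checking that the worst terms (those where $\mu'''$ arises from $\eps \p_{yy}\mu$ and cannot be canceled by any layer) contribute exactly $c_0(\mu'''/\mu)$ and no worse dependence on $\mu$. This identification is what ultimately constrains the hypothesis on $\mu$ in Theorem \ref{Th.no.slip}.
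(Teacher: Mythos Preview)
Your outline for the second estimate (\ref{sd.1}) is correct: the $L^2$ bound is an immediate consequence of the decomposition $\mathcal{F}_u = \eps^2 T_1 + O(\eps^{5/2})$, $\mathcal{F}_v = \eps^2 T_2 + O(\eps^{5/2})$ and the profile estimate (\ref{T1.1}).

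For the first estimate, however, there is a genuine gap. Your proposed Cauchy--Schwarz pairing against $\|\sqrt{\eps}\,\p_x u\|_2$ produces
\[
\Big|\int \eps^{\frac{1}{2}-\gamma} T_1\, \p_x u\Big| \le \eps^{-\gamma}\|T_1\|_2 \|\sqrt{\eps}\,\p_x u\|_2,
\]
and after Young's inequality the constant on the right is of order $\eps^{-2\gamma}c_0^2$. This loss is fatal: feeding it into the nonlinear closure $\|u,v\|_X^2 \lesssim C c_0 + \eps^{\gamma/2}\|u,v\|_X^3$ forces $\|u,v\|_X \sim \eps^{-\gamma}$, and then the cubic term is $\eps^{-5\gamma/2}$, which does not absorb. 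The ``$\eps^{\text{something}}$'' you wrote does not exist here; there is no extra power available from naive Cauchy--Schwarz.

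The paper avoids this by pairing against the $\mathcal{P}$-norm $\|\sqrt{u_s}\nabla v\|_2$ rather than the $\mathcal{E}$-norm, using a localization $\chi_\delta$ with $\delta = \eps^{10\gamma}$. This works only because of two structural facts about the forcing that you do not identify. First, $T_2$ vanishes at $y=0,2$ (every term in it does, via (\ref{eqn.eul.vort.1}) and the boundary conditions on $v^1_e, v^2_p$), so $\|T_2/\tilde{y}\|_2$ is finite and one can write $|\int T_2 \p_x v| \le \|T_2/\tilde{y}\|_2\|\sqrt{u_s}\p_x v\|_2$ directly. Second, inside $T_1$ the genuinely problematic terms are the \emph{Euler} contributions $u^1_e\p_x u^1_e + v^1_e\p_y u^1_e - \Delta u^1_e$, which do \emph{not} vanish at the boundary; for these the paper integrates by parts in $y$, moving the derivative off $\p_y v = -\p_x u$ onto the forcing and then using $\|v\|_2 \lesssim \|\sqrt{u_s}\nabla v\|_2$. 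Your diagnosis that ``near-boundary Prandtl terms'' are the obstruction is inverted: the Prandtl-type terms in $T_1$ (such as $\mu' v^2_p$ and $\p_Y u^2_p v^1_e$) are the \emph{easy} ones, precisely because their boundary-layer structure supplies the missing factor of $y$ needed to match the weight $\sqrt{u_s}$.
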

\begin{proof}
First recall the decomposition of $\mathcal{F}_u, \mathcal{F}_v$ given in (\ref{decomp.1}). We first estimate: 
\begin{align}
\int \eps^{\frac{1}{2}-\gamma} T_1 \cdot u = \int \eps^{\frac{1}{2}-\gamma} T_1 \cdot u [\chi_\delta + \chi_\delta^c].
\end{align}

For the nonlocal part, we use estimate (\ref{T1.1}): 
\begin{align}
|\int \eps^{\frac{1}{2}-\gamma} T_1 u \chi_\delta^c| \le \delta^{-\frac{1}{2}} \eps^{\frac{1}{2}-\gamma} ||T_1||_2 ||\sqrt{u_s} \p_x u||_2. 
\end{align}

For the local component, we integrate by parts in $y$:
\begin{align} \n
|\int \eps^{\frac{1}{2}-\gamma} T_1 u \chi_\delta(y)| &= \eps^{\frac{1}{2}-\gamma} |\int y \p_y u T_1 \chi_\delta + y u \p_y T_1 \chi_\delta + y u T_1 \frac{\chi_\delta'}{\delta}| \\ \n
& \le \delta \eps^{-\gamma} ||\sqrt{\eps} \p_y u||_2 ||T_1||_2 + \eps^{\frac{1}{2}-\gamma} \sqrt{\delta} ||\p_y T_1||_2 ||\sqrt{u_s}\p_x u||_2 \\ \label{pds.1} & + \delta^{-\frac{1}{2}} \eps^{\frac{1}{2}-\gamma} ||T_1||_2 ||\sqrt{u_s}\p_x u||_2. 
\end{align}

The same estimates can be used for $\eps^{\frac{1}{2}-\gamma} T_2 \cdot v$. We now come to the higher order terms, in which the non-local contributions are estimated via: 
\begin{align}
|\int \eps^{\frac{1}{2}-\gamma} T_1 \cdot \p_x u \chi_\delta^c  + \int \eps^{\frac{1}{2}-\gamma} T_2 \cdot \p_x v  \chi_\delta^c| \le \delta^{-\frac{1}{2}} \eps^{\frac{1}{2}-\gamma} ||T_1||_2 ||\sqrt{u_s} \nabla v||_2. 
\end{align}

We now focus on the $T_1$ localized contributions individually. First: 
\begin{align}
& \eps^{\frac{1}{2}-\gamma} |\int \mu' v^2_p \p_y v \chi_\delta| \le \eps^{-\gamma} ||\mu', \frac{v^2_p}{Y}||_\infty \sqrt{\delta} ||\sqrt{u_s} \p_y v||_2, \\
 &\eps^{\frac{1}{2}-\gamma} |\int \p_Y u^2_p v^1_e \p_y v \chi_\delta| \le \eps^{\frac{1}{2}-\gamma} ||\p_Y u^2_p||_{\infty} ||\frac{v^1_e}{y}||_2 ||\sqrt{u_s} \p_y v||_2, \\
 &\eps^{\frac{1}{2}-\gamma} |\int \Big[ \mu \chi' v^{2,0}_p + 3\chi' \p_Y u^{2,0}_p \Big] \p_y v| \le \eps^{\frac{1}{2}-\gamma} ||\mu \chi' v^{2,0}_p + 3\chi' \p_Y u^{2,0}_p||_{\infty} ||\sqrt{u_s} \p_y v||_2. 
\end{align}

The remaining terms in $T_1$ are handled by integrating by parts in $y$ and proceeding as in (\ref{pds.1}): 
\begin{align} \n
\eps^{\frac{1}{2}-\gamma} \int \Big[ u^1_e \p_x u^1_e &+ v^1_e \p_y u^1_e - \Delta u^1_e \Big] \cdot  \p_y v \chi_\delta \\ \n \
&= - \int \eps^{\frac{1}{2}-\gamma} \underbrace{\p_y \Big[ u^1_e \p_x u^1_e + v^1_e \p_y u^1_e - \Delta u^1_e \Big] }_{m_1} \cdot v \chi_\delta \\ \n & \hspace{10 mm} - \int \eps^{\frac{1}{2}-\gamma} \Big[ u^1_e \p_x u^1_e + v^1_e \p_y u^1_e - \Delta u^1_e \Big] \cdot v \frac{\chi_\delta'}{\delta} \\ \label{ml.1}
& = (\ref{ml.1}.1) + (\ref{ml.1}.2).
\end{align}

First: 
\begin{align}
|(\ref{ml.1}.1)| = |\int \eps^{\frac{1}{2}-\gamma} m_1 v| \le \eps^{\frac{1}{2}-\gamma} ||m_1||_2 ||v||_2 \le \eps^{\frac{1}{2}-\gamma} ||m_1||_2 ||\sqrt{u_s}\nabla v||_2.
\end{align}

Second:
\begin{align}
|(\ref{ml.1}.2)| \le \eps^{\frac{1}{2}-\gamma} \delta^{-\frac{3}{2}} ||u^1_e \p_x u^1_e + v^1_e \p_y u^1_e - \Delta u^1_e||_2 ||\sqrt{u_s} \p_x v||_2 
\end{align}

We now consider the localized contributions from $T_2$, for which we apply estimate (\ref{T1.1}):
\begin{align}
\eps^{\frac{1}{2}-\gamma}|\int T_2 \cdot  \p_x v| \le ||\frac{T_2}{\tilde{y}}||_{2} ||\sqrt{u_s} \p_x v||_2. 
\end{align}

We now make the selection of $\delta = \eps^{10 \gamma}$, and $\gamma << 1$ sufficiently small, which closes all of the above estimates. Finally, the $\bigO(\eps^{\frac{5}{2}})$ are handled easily via: 
\begin{align} \n
\eps^{-\frac{3}{2}-\gamma} &|\int [\mathcal{F}_u - T_1] \cdot [u + \p_x u] + \int [\mathcal{F}_v - T_2] \cdot [v + \p_x v]| \\ \n &\lesssim \int \eps^{1-\gamma} \cdot [u + \p_x u + v + \p_x v]| \\ 
& \lesssim \eps^{\frac{1}{2}-\gamma} ||\sqrt{\eps} \nabla v||_2.
\end{align}

\end{proof}

We now obtain our complete nonlinear estimate: 
\begin{corollary} Solutions $[u,v]$ to the system (\ref{sys.1}) satisfy: 
\begin{align}
||u,v||_X^2 \lesssim C(u_s, v_s) c_0(\frac{\mu'''}{\mu}) + \eps^{\frac{\gamma}{2}} ||u,v||_{X}^3.
\end{align}
\end{corollary}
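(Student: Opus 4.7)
The plan is to feed the two lemmas just proved in this section into the corollary at the end of Section~\ref{sec.1}, namely
\begin{equation*}
\|u,v\|_X^2 \lesssim \mathcal{R}_1 + \mathcal{R}_2 + \eps^{\gamma/2}\|f,g\|_2^2.
\end{equation*}
Since $f = \mathcal{N}_1(u,v) + \mathcal{F}_u$ and $g = \mathcal{N}_2(u,v) + \mathcal{F}_v$, I would first split each of $\mathcal{R}_1$, $\mathcal{R}_2$, and $\|f,g\|_2^2$ into a nonlinear piece and a forcing piece. Using the divergence-free identity $\p_x u = -\p_y v$, the combined test function against $f$ in $\mathcal{R}_1 + \mathcal{R}_2$ is $u - \p_y v = u + \p_x u$, which matches exactly the multiplier appearing in both lemmas; the test function against $g$ is $\eps v + \p_x v$, which agrees with the lemma's $v + \p_x v$ up to the harmless lower-order term $(1-\eps)v$ (controllable by Poincar\'e and absorbed into $\|u,v\|_{\E}^2$).

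For the nonlinear pieces I would invoke the first lemma, yielding a contribution of $\eps^{\gamma/2}[\|u,v\|_X^3 + \|u,v\|_X^2]$; the quadratic term is absorbed into the left-hand side for $\eps$ sufficiently small, leaving the desired $\eps^{\gamma/2}\|u,v\|_X^3$ term. For the forcing pieces I would apply the second lemma with a small parameter $\delta > 0$, producing $\delta \|u,v\|_X^2 + C(u_s,v_s)\,c_0(\mu'''/\mu)$; the $L^2$ contribution from (\ref{sd.1}) is subsumed into the $C\cdot c_0$ term because $\eps^{1/2 - 3\gamma/4}$ is uniformly bounded for $\gamma$ small.

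Finally, taking $\delta$ small enough (independently of $\eps$) to absorb $\delta\|u,v\|_X^2$ into the left-hand side closes the estimate and gives the stated inequality. Because all the analytic work already lives in the two preceding lemmas and in the linear corollary, the argument here is essentially bookkeeping; the one point requiring genuine care is verifying that the multipliers $u$, $-\p_y v$, $\eps v$, $\p_x v$ that actually appear in $\mathcal{R}_1 + \mathcal{R}_2$ are covered by the $u + \p_x u$ and $v + \p_x v$ multipliers used in the lemmas, which the divergence-free identity accomplishes modulo the harmless lower-order corrections mentioned above.
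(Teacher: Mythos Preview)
Your proposal is correct and matches the paper's (implicit) argument: the paper states this corollary without proof, relying on the reader to combine the linear corollary $\|u,v\|_X^2 \lesssim \mathcal{R}_1 + \mathcal{R}_2 + \eps^{\gamma/2}\|f,g\|_2^2$ with the two lemmas of Section~\ref{sec.2}, exactly as you describe. Your observation that $-\p_y v = \p_x u$ reconciles the multipliers in $\mathcal{R}_1 + \mathcal{R}_2$ with those in the lemmas is the right bookkeeping step; for the $g$-multiplier, note that since the lemmas' proofs in fact bound $\int g \cdot v$ and $\int g \cdot \p_x v$ separately, the actual multiplier $\eps v + \p_x v$ is handled directly (the factor $\eps \le 1$ only helps), so no additional absorption argument is needed there.
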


From here, the main result, Theorem \ref{Th.no.slip} follows from a straightforward application of the contraction mapping theorem. 

%This is the AP Estimate, which we do not need to use here...

%This is the higher order (E_2) esitmate, which we do not need due to AP omitted...

%This is the Pressure esitmate, which we do not need due to AP omitted...

\iffalse %This is the Stokes and u_y|_0 esitmate, which we do not need due to AP omitted...
\subsection{Stokes Estimate}

\begin{lemma}
\begin{align}
||\eps (L-x)^\gamma \Big\{ \p_{yy}u, \p_{xy}u, \p_{xx} u, \p_{xx}v \Big\}||_{L^2} \lesssim C(L) \Big[ \Big]
\end{align}
\end{lemma}
\begin{proof}

\end{proof}

\subsection{$\p_y u$ Boundary Estimate}
\begin{lemma}

\end{lemma}
\begin{proof}

\end{proof}

\fi %This is the Stokes and u_y|_0 esitmate, which we do not need due to AP omitted...

\break

\appendix

\section{Construction of Layers}

We start with the asymptotic expansions:
\begin{align} \label{exp.u}
&u^\eps := \mu + \eps u^1_e + \eps u^1_p + \eps^{\frac{3}{2}} u^2_e + \eps^{\frac{3}{2}} u^2_p + \eps^{\frac{3}{2}+\gamma}u, \\ \label{exp.v}
&v^\eps := \eps v^1_e + \eps^{\frac{3}{2}} v^1_p + \eps^{\frac{3}{2}}v^2_e + \eps^2 v^2_p + \eps^{\frac{3}{2}+\gamma}v, \\ \label{exp.P}
&P^\eps := \eps P^1_e + \eps^{\frac{3}{2}}P^2_e + \eps[P^1_p + \eps P^{1,a}_p] + \eps^{\frac{3}{2}}P^2_p + \eps^{\frac{3}{2}+\gamma} P
\end{align}

\subsection{Formal Asymptotic Expansion}

Here the Eulerian profiles are functions of $(x,y)$, whereas the boundary layer profiles are functions of $(x,Y)$, where: 
\begin{align}
Y = 
\left\{
\begin{aligned}
&Y_+ := \frac{2-y}{\sqrt{\eps}} \text{ if } 1 \le y \le 2, \\
&Y_- := \frac{y}{\sqrt{\eps}} \text{ if } 0 \le y \le 1. 
\end{aligned}
\right.
\end{align}

Due to this, we break up the boundary layer profiles into two components, one supported near $y = 0$ and one supported near $y = 2$: 
\begin{align}
u^i_p(x,Y) = u^{i,-}_{p}(x,Y_-) + u^{i,+}_p(x,Y_+).  
\end{align}

As a \underline{notational convention}, we use: 
\begin{align}
\p_Y u^i_p := \p_{Y_-} u^{i,-}_p - \p_{Y_+} u^{i,+}_p. 
\end{align}

The purpose of such a convention is to obtain the chain rule: 
\begin{align}
\p_y u^i_p = \frac{1}{\sqrt{\eps}} \p_Y u^i_p. 
\end{align}

Let us set the following notations: 
\begin{align}
&u^\eps_E := \mu + \eps u^1_e + \eps^{\frac{3}{2}} u^2_e, \hspace{3 mm} v^\eps_E := \eps v^1_e + \eps^{\frac{3}{2}} v^2_e, \\
& u_s^{(2)} :=\mu + \eps u^1_e + \eps u^1_p + \eps^{\frac{3}{2}} u^2_e + \eps^{\frac{3}{2}} u^2_p, \\
& v_s^{(2)} :=  \eps v^1_e + \eps^{\frac{3}{2}} v^1_p + \eps^{\frac{3}{2}}v^2_e + \eps^2 v^2_p, \\
& P^{(2)}_s := P^\eps := \eps P^1_e + \eps^{\frac{3}{2}}P^2_e + \eps[P^1_p + \eps P^{1,a}_p] + \eps^{\frac{3}{2}}P^2_p.
\end{align}

Using the expansions (\ref{exp.u})  - (\ref{exp.P}), we will first expand out the purely Euler terms: 
\begin{align} \n 
u^\eps_E \p_x u^\eps_E &= \Big[ \mu + \eps u^1_e + \eps^{\frac{3}{2}}u^2_e \Big] \cdot \Big[ \eps u^1_{ex} + \eps^{\frac{3}{2}} u^2_{ex} \Big] \\ \label{up.1}
& = \eps \mu u^1_{ex} + \eps^2 u^1_e u^1_{ex} + \eps^{\frac{5}{2}} u^2_e u^1_{ex} + \eps^{\frac{3}{2}} \mu u^2_{ex} + \eps^{\frac{5}{2}} u^1_e u^2_{ex} + \eps^3 u^2_e u^2_{ex}
\end{align}

\begin{align} \n
v^\eps_E \p_y u^\eps_E &= \Big[\eps v^1_e + \eps^{\frac{3}{2}} v^2_e \Big] \cdot \Big[ \mu' + \eps u^1_{ey} + \eps^{\frac{3}{2}} u^2_{ey} \Big] \\ \label{up.2}
& = \eps \mu' v^1_e + \eps^2 v^1_e u^1_{ey} + \eps^{\frac{5}{2}} v^1_e u^2_{ey} + \eps^{\frac{3}{2}}\mu' v^2_e + \eps^{\frac{5}{2}} v^2_e u^1_{ey}+ \eps^3 v^2_e u^2_{ey}
\end{align}

\begin{align} \n
&u^\eps_E \p_x v^\eps_E = \Big[ \mu + \eps u^1_e + \eps^{\frac{3}{2}}u^2_e \Big]  \cdot \Big[ \eps v^1_{ex} + \eps^{\frac{3}{2}} v^2_{ex} \Big] \\ \label{up.3}
& \hspace{10 mm} = \eps \mu v^1_{ex}+ \mu \eps^{\frac{3}{2}} v^2_{ex} + \eps^2 u^1_e v^1_{ex} + \eps^{\frac{5}{2}} u^1_e v^2_{ex}+ \eps^{\frac{5}{2}} u^2_e v^1_{ex} + \eps^3 u^2_e v^2_{ex}, \\ \n
&v^\eps \p_y v^\eps_E = \Big[\eps v^1_e + \eps^{\frac{3}{2}} v^2_e \Big] \cdot \Big[ \eps v^1_{ey} + \eps^{\frac{3}{2}} v^2_{ey} \Big] \\ \label{up.4}
& \hspace{10 mm} = \eps^2 v^1_e v^1_{ey} + \eps^{\frac{5}{2}} v^1_e v^2_{ey} + \eps^{\frac{5}{2}} v^2_e v^1_{ey} + \eps^3 v^2_e v^2_{ey}.
\end{align}

\begin{align} \label{up.5}
&\p_x P^\eps_E = \eps P^1_{ex} + \eps^{\frac{3}{2}} P^2_{ex}, \\ \label{up.6}
&\p_y P^\eps_E = \eps P^1_{ey} + \eps^\frac{3}{2} P^2_{ey} \\ \label{up.7}
&\eps \Delta u^\eps_E = \eps \mu''(y) + \eps^2 \Delta u^1_e + \eps^{\frac{5}{2}} \Delta u^2_e, \\ \label{up.8}
&\eps \Delta v^\eps_E = \eps^2 \Delta v^1_e + \eps^{\frac{5}{2}} \Delta v^2_e.  
\end{align}

We now expand: 
\begin{align} \n
u_s^{(2)} \p_x u^{(2)}_s = &u^\eps_E \p_x u^\eps_E + \eps^2 u^1_p u^1_{ex} + \eps^2 u^1_p u^1_{px} + \eps^{\frac{5}{2}} u^1_p u^2_{ex} \\ \n
& + \eps \mu u^1_{px} + \eps^2 u^1_e u^1_{px} + \eps^{\frac{5}{2}} u^2_e u^1_{px} + \eps^{\frac{5}{2}} u^2_p u^1_{ex} + \eps^{\frac{5}{2}} u^2_p u^1_{px} \\ \n
& + \eps^3 u^2_p u^2_{ex} + \eps^3 u^2_p u^2_{px} + \eps^{\frac{3}{2}} \mu u^2_{px} + \eps^{\frac{5}{2}} u^1_e u^2_{px} \\ \label{all.1}
& + \eps^{\frac{5}{2}} u^1_p u^2_{px} + \eps^3 u^2_e u^2_{px}.
\end{align}

\begin{align} \n
v^{(2)}_s \p_y u^{(2)}_s =& v^\eps_E \p_y u^\eps_E + \eps^{\frac{3}{2}} \mu' v^1_p + \eps^{\frac{5}{2}} v^1_p u^1_{ey} + \eps^2 v^1_p u^1_{pY} + \eps^3 v^1_p u^2_{ey} \\ \n
& + \eps^{\frac{3}{2}} v^1_e u^1_{pY} + \eps^2 v^2_e u^1_{pY} + \eps^2 \mu' v^2_p + \eps^3 u^1_{ey} v^2_p + \eps^{\frac{5}{2}} v^2_p u^1_{pY} \\ \label{all.2}
& + \eps^{\frac{3}{2}} v^2_p u^2_{ey} + \eps^3 v^2_p u^2_{pY} + \eps^{\frac{5}{2}} v^2_e u^2_{pY} + \eps^{\frac{5}{2}} v^1_p u^2_{pY} + \eps^2 v^1_e u^2_{pY}.
\end{align}

\begin{align} \n
u^{(2)}_s \p_x v^{(2)}_s = & u^\eps_E \p_x v^\eps_E + \eps^2 u^1_p v^1_{ex} + \eps^{\frac{5}{2}} u^1_p v^1_{px} + \eps^{\frac{5}{2}} u^1_p v^2_{ex} + \eps^{\frac{3}{2}} \mu v^1_{px} \\ \n &+ \eps^{\frac{5}{2}} u^1_e v^1_{px} + \eps^3 u^2_e v^1_{px} + \eps^2 \mu v^2_{px} + \eps^3 u^1_e v^2_{px} \\ \n
& + \eps^3 u^1_p v^2_{px} + \eps^{\frac{5}{2}} u^2_e v^2_{px} + \eps^{\frac{7}{2}} u^2_p v^2_{px} + \eps^3 u^2_p v^2_{ex} \\ \label{all.3}
& + \eps^3 u^2_p v^1_{px} + \eps^{\frac{5}{2}} v^1_{ex}u^2_p.
\end{align}

\begin{align} \n
v^{(2)}_s \p_y v^{(2)}_s = &v^\eps_E \p_y v^\eps_E + \eps^{\frac{5}{2}} v^1_p v^1_{ey} + \eps^{\frac{5}{2}} v^1_p v^1_{pY} + \eps^3 v^1_p v^2_{ey} + \eps^2 v^1_e v^1_{pY} \\ \n
& + \eps^{\frac{5}{2}} v^2_e v^1_{pY} + \eps^3 v^1_{ey} v^2_p + \eps^3 v^1_{pY} v^2_p + \eps^{\frac{7}{2}} v^2_p v^2_{ey} + \eps^{\frac{7}{2}} v^2_p v^2_{pY} \\ \label{all.4} &+ \eps^{\frac{5}{2}} v^1_e v^2_{pY} + \eps^3 v^1_p v^2_{pY} + \eps^3 v^2_e v^2_{pY}.
\end{align}

Finally, we have the linear terms: 
\begin{align} \label{all.5}
&\p_x P_s = \p_x P^\eps_E + \eps^{\frac{3}{2}} P^2_{px} + \eps P^1_{px} + \eps^2 P^{1,a}_{px}, \\  \label{all.6}
& \p_y P_s = \p_y P^\eps_E + \eps P^2_{pY} + \sqrt{\eps} P^1_{pY} + \eps^{\frac{3}{2}} P^{1,a}_{pY} \\\label{all.7}
&\eps \Delta u^\eps = \eps \Delta u^\eps_E + \eps^2 u^1_{pxx} + \eps^{\frac{5}{2}} u^2_{pxx} + \eps u^1_{pYY} + \eps^{\frac{3}{2}} u^2_{pYY}, \\ \label{all.8}
& \eps \Delta v^\eps = \eps \Delta v^\eps_E + \eps^{\frac{5}{2}} v^1_{pxx} + \eps^{\frac{3}{2}} v^1_{pYY} + \eps^3 v^2_{pxx} + \eps^2 v^2_{pYY}.
\end{align}

\subsection{Euler Equations}

The equations satisfied by the Euler layers are obtained by collecting the $\bigO(\eps)$ order terms from (\ref{up.1}) - (\ref{up.8}), and is now shown: 
\begin{align} \label{v1e.BVP}
\left.
\begin{aligned}
& \mu \p_x u^1_e + \mu' v^1_e + \p_x  P^1_e = \mu''(y) \\
& \mu \p_x v^1_e + \p_y P^1_e = 0, \\
& \p_x u^1_e + \p_y v^1_e = 0, \\
& v^1_e|_{x = 0} = v^1_e|_{y = 0} = v^1_e|_{y = 2} = v^1_e|_{x = L} = 0. 
\end{aligned}
\right\}
\end{align}

By going to the vorticity formulation, we arrive at the following problem: 
\begin{align} \label{eqn.eul.vort.1}
-\mu \Delta v^1_e + \mu'' v^1_e = \mu'''(y), \hspace{3 mm} v^1_e|_{\p \Omega} = 0, \hspace{3 mm} u^1_e := \int_0^x v^1_{ey}.
\end{align}

We will make the assumptions that: 
\begin{align} \label{as.euler.1}
\frac{\mu''}{\mu}, \frac{\mu'''}{\mu} \text{ vanish at high order at $y = 0, 2$.}
\end{align}

According to (\ref{as.euler.1}), we divide (\ref{eqn.eul.vort.1}) by $\mu$ to obtain: 
\begin{align} \label{divide.1}
-\Delta v^1_e + \frac{\mu''}{\mu} v^1_e = \frac{\mu'''}{\mu}, \hspace{5 mm} v^1_e|_{\p \Omega} = 0. 
\end{align}

By evaluating (\ref{divide.1}) at $y = 0, 2$ and recalling (\ref{as.euler.1}), it is clear that $\p_{yy} v^1_e|_{y = 0, 2} = 0$. The system satisfied by the second Euler layer is obtained by collecting the $\bigO(\eps^{\frac{3}{2}})$ terms from (\ref{up.1}) - (\ref{up.8}), and is shown here: 
\begin{align} \label{v2e.BVP}
\left.
\begin{aligned}
& \mu \p_x u^2_e + \mu' v^2_e + \p_x  P^2_e = 0 \\
& \mu \p_x v^2_e + \p_y P^2_e = 0, \\
& \p_x u^2_e + \p_y v^2_e = 0, \\
& v^2_e|_{x = 0} = v^2_e|_{x = L} = v^2_e|_{y = 2} = 0, \hspace{3 mm} v^2_e|_{y = 0} = -v^1_p|_{Y = 0}.
\end{aligned}
\right\}
\end{align}

Going to vorticity produces the system: 
\begin{align}
-\mu \Delta v^2_e + \mu'' v^2_e = 0, \hspace{3 mm} v^2_e|_{y = 0, 2} = - v^1_p|_{y = 0, 2}.
\end{align}

We will assume high-order compatibility conditions on the data $v^2_e|_{x = 0, L}$ with $v^2_e|_{y = 0,2}$ at the four corners of the domain, $\Omega$. The first of these conditions at the corner $x = 0, y = 0$ is as follows:
\begin{align} \label{compat.1}
\p_{yy}v^1_e|_{x= 0}(0) = \p_{yy}v^1_e|_{y = 0}(0) = -\frac{\mu''}{\mu} v^1_p|_{y = 0}. 
\end{align}

The remaining compatibility conditions may be derived in the same manner. These will contribute higher order terms, which are the $\bigO(\eps^2)$ terms from (\ref{up.1}) - (\ref{up.8}): 
\begin{align} \n
\mathcal{C}_{1,u} := &\eps^2[u^1_e \p_x u^1_e + \sqrt{\eps}u^2_e \p_x u^1_e + \sqrt{\eps} u^1_e \p_x u^2_e + \eps u^2_e \p_x u^2_e] + \\ \n
& \eps^2 \Big[ v^1_e \p_y u^1_e + \sqrt{\eps}v^2_e \p_y u^1_e + \sqrt{\eps}v^1_e \p_y u^2_e + \eps v^2_e \p_y u^2_e \Big] \\
& -\eps^2 \Delta u^1_e - \eps^{\frac{5}{2}} \Delta u^2_e, \\ \n
\mathcal{C}_{1,v} := &\eps^2 u^1_e \p_x v^1_e + \eps^{\frac{5}{2}} u^1_e \p_x v^2_e + \eps^{\frac{5}{2}} u^2_e \p_x v^1_e + \eps^3 u^2_e \p_x v^2_e \\ \n
& + \eps^2 v^1_e \p_y v^1_e + \eps^{\frac{5}{2}} v^2_e \p_y v^1_e + \eps^{\frac{5}{2}} v^1_e \p_y v^2_e + \eps^3 v^2_e \p_y v^2_e \\
& - \eps^2 \Delta v^1_e - \eps^{\frac{5}{2}} \Delta v^2_e. 
\end{align}

The following follow from standard elliptic theory: 
\begin{lemma}
Assuming (\ref{as.euler.1}) and compatibility conditions for both $v^1_e, v^2_e$ for arbitrary order as in (\ref{compat.1}), there exist unique solutions, $v^1_e, v^2_e$ to (\ref{v1e.BVP}) and (\ref{v2e.BVP}) that are regular:  
\begin{align}
| \p_x^l \p_y^m \{ u^i_e, v^i_e \}| \lesssim c_0(\frac{\mu'''}{\mu}) \times C_{l,k} \text{ for } i = 1,2.
\end{align}
\end{lemma}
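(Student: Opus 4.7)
The plan is to treat each of the two boundary value problems (\ref{eqn.eul.vort.1}) and (\ref{v2e.BVP}) as standard elliptic BVPs with smooth coefficients on the rectangular domain $\Omega$, and then bootstrap Sobolev regularity up to the corners to obtain pointwise derivative estimates. For $v^1_e$ I would start from the divided form (\ref{divide.1}): $-\Delta v^1_e + \frac{\mu''}{\mu} v^1_e = \frac{\mu'''}{\mu}$ with $v^1_e|_{\p \Omega} = 0$. By assumption (\ref{as.euler.1}) both the coefficient $\frac{\mu''}{\mu}$ and the forcing $\frac{\mu'''}{\mu}$ lie in $C^\infty([0,2])$, with every Sobolev norm controlled by $||\frac{\mu'''}{\mu}||_{W^{100,\infty}} \sim c_0$. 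Existence and uniqueness of a weak $H^1_0$ solution follow from the Fredholm alternative applied to $-\Delta + V$ with $V \in L^\infty$; uniqueness is obtained via an energy identity combined with Poincar\'e, provided $||\mu''/\mu||_\infty$ is suitably controlled (which is automatic from the smallness built into $c_0$).

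Next, I would bootstrap regularity. Interior elliptic regularity and $L^2$-shift estimates near flat portions of the boundary give $v^1_e \in H^k(\Omega)$ for every $k$ with norm bounded by $c_0 \cdot C_k$, but the nontrivial issue is smoothness at the four corners of $\Omega$. This is where the compatibility conditions (\ref{compat.1}) and their higher-order analogues enter: they match the Taylor series of $v^1_e$ dictated by the interior equation at each corner with that dictated by the boundary data, killing order by order the corner singularities familiar from Grisvard's theory of elliptic problems on polygons. With arbitrarily many such conditions imposed, one obtains $v^1_e \in C^\infty(\overline{\Omega})$, and Sobolev embedding yields $|\p_x^l \p_y^m v^1_e| \lesssim c_0 \cdot C_{l,m}$. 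The corresponding bound for $u^1_e = \int_0^x \p_y v^1_e \, \ud x'$ is then immediate.

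For $v^2_e$ the only new feature is the inhomogeneous datum $-v^1_p|_{Y = 0}$ on $y = 0$. I would first construct a smooth lift $\tilde{v}(x,y)$ supported in a neighborhood of $y = 0$ (and analogously $y = 2$) matching this boundary data, using the previously constructed profile $v^1_p$, and then solve the elliptic problem for $w := v^2_e - \tilde{v}$ with homogeneous boundary data exactly as above. The compatibility condition displayed in (\ref{compat.1}) is the first-order relation at the corner $(0,0)$; the full set is obtained by iteratively differentiating the equation tangentially and normally and matching at each of the four corners, using the assumed vanishing of $v^1_p$ at the corners.

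The main obstacle is corner regularity rather than existence: without the compatibility conditions, even $C^\infty$ data produce only limited Sobolev regularity at a right-angle corner, and the constants $C_{l,m}$ would blow up past some finite order. Verifying that the prescribed compatibility conditions eliminate every obstruction is a power-series matching at each corner -- combinatorially involved but routine. Tracking constants through the Fredholm solvability estimate and each regularity shift then yields the overall dependence on $c_0(\mu'''/\mu)$ stated in the lemma.
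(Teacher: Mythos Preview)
Your proposal is correct and aligns with the paper's approach: the paper's entire proof consists of the single sentence ``The following follow from standard elliptic theory,'' and what you have written is a faithful elaboration of that sentence (Fredholm solvability for $-\Delta + V$, interior and boundary shift estimates, and Grisvard-type corner compatibility to bootstrap to $C^\infty(\overline{\Omega})$).

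One small correction: your justification of uniqueness via ``smallness built into $c_0$'' is not quite right, since the paper does not assume $c_0(\mu'''/\mu)$ is small. The coercivity needed to rule out a nontrivial kernel of $-\Delta + \mu''/\mu$ on $H^1_0(\Omega)$ instead comes from the thinness of the domain: $\Omega = (0,L)\times(0,2)$ with $L \ll 1$, so Poincar\'e in $x$ gives $\|v\|_{L^2}^2 \le \mathcal{O}(L^2)\|\p_x v\|_{L^2}^2$, which absorbs the potential term regardless of the size of $\|\mu''/\mu\|_\infty$. With that adjustment your argument goes through.
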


\subsection{Boundary Layer Equations}

Collecting the $\bigO(\eps)$ terms from (\ref{all.1})- (\ref{all.8}):
\begin{align} \label{pr.BVP.1}
\left.
\begin{aligned}
&\mu \p_x u^{1,0, -}_p - \p_{Y_-Y_-}u^{1,0, -}_p = 0, \hspace{3 mm} \p_{Y_-} P^{1,0, -}_p = 0, \\
&u^{1,0, -}_p|_{x = 0} = , \hspace{3 mm} u^{1,0, -}_p|_{Y_- = 0} = -u^1_e|_{y = 0}, u^{1,0, -}_p|_{Y_- \rightarrow \infty} = 0 \\
&v^{1,0, -}_p = \int_{Y_-}^\infty \p_x u^{1,0, -}_p. 
\end{aligned}
\right\}
\end{align}

Here we must assume the compatibility condition: 
\begin{align} \label{comp.BL}
u^{1,0, -}_p(0,Y_-)|_{Y_- = 0} = -u^1_e|_{y = 0}, \hspace{3 mm} \p_{Y_-}^2 u^{1,0, -}_p(0,Y)|_{Y_- = 0} = 0. 
\end{align}

We will also assume higher order compatibility conditions that can be obtained by differentiating the above system and reading the resulting equalities. Note that we construct $u^{1,0, -}_p, v^{1,0, -}_p$ on $(0,L) \times (0,\infty)$. We now cut-off these layers and make a $\bigO(\sqrt{\eps})$-order error: 
\begin{align} \label{cut.off.1}
u^{1,-}_p = \chi(\frac{\sqrt{\eps}Y}{100}) u^{1,0, -}_p - \frac{\sqrt{\eps}}{100} \chi'(\frac{\sqrt{\eps}Y}{100}) \int_0^x v^{1,0,-}, \hspace{2 mm} v^{1,-}_p := \chi(\frac{\sqrt{\eps}Y}{100}) v^{1,0, -}_p
\end{align}

$u^{1, 0, +}, v^{1,0,+}, u^{1,+}, v^{1,+}$ are defined analogously, and we omit these details. We then define: 
\begin{align}
&u^{1,0}_p := u^{1,0,-}_p + u^{1,0,+}_p, \hspace{5 mm} v^1_p := v^{1,0,-}_p + v^{1,0,+}_p, \\
&u^1_p := u^{1,-}_p + u^{1,+}_p, \hspace{5 mm} v^1_p := v^{1,-}_p + v^{1,+}_p.
\end{align}

Note that due to the cut-off in (\ref{cut.off.1}), $u^1_p, v^1_p$ is smooth. The contributions to the next layer are: 
\begin{align} \n
\mathcal{C}_{2,u} := &\eps^2 \p_x P^{1,a}_p + \eps^2 u^1_e \p_x u^1_p + \eps^2 u^1_p \p_x u^1_e + \eps^2 u^1_p\p_x u^1_p \\ \n
& + \eps^{\frac{3}{2}} v^1_e \p_Y u^1_p + \eps^{\frac{3}{2}} v^1_p \mu' + \eps^{\frac{5}{2}} v^1_p \p_y u^1_e + \eps^2 v^1_p \p_Y u^1_p \\
& - \eps^2 \p_{xx}u^1_p + \Big[ \eps^{\frac{5}{2}} \Big( u^2_e u^1_{px} + u^1_p u^2_{ex} \Big) + \eps^2 v^2_e u^1_{pY} + \eps^3 v^1_p u^2_{ey} \Big] + \mathcal{C}^1_{cut}.
\end{align}

Here $\mathcal{C}^1_{cut}$ is the error introduced by the cut-off functions in (\ref{cut.off.1}):
\begin{align} \n
\mathcal{C}^{1}_{cut} := &\sqrt{\eps} \mu \chi' v^{1,0}_p + 3\sqrt{\eps} \chi' \p_Y u^{2,0}_p \\ &+ 3\eps \chi'' u^{2,0}_p - \eps^{\frac{3}{2}} \chi''' \int_Y^\infty u^{2,0}_p, 
\end{align}

Define the auxiliary pressure via: 
\begin{align} \n
P^{1,a}_p := -\int_Y^1 \Big[ &\mu \p_x v^1_p + \eps u^1_e \p_x v^1_p + \sqrt{\eps} u^1_p \p_x v^1_e + \eps u^1_p \p_x v^1_p \\ \n
& + \sqrt{\eps} v^1_e \p_Y v^1_p + \eps v^1_p \p_y v^1_e + \eps v^1_p \p_Y v^1_p - \p_{YY}v^1_p - \eps \p_{xx}v^1_p \\ 
& + \eps^{-\frac{3}{2}}\Big( \eps^3 u^2_e v^1_{px} + \eps^{\frac{5}{2}} u^1_p v^2_{ex} + \eps^{\frac{5}{2}} v^2_e v^1_{py} + \eps^3 v^1_p v^2_{ey} \Big) \Big].
\end{align}

With such a choice, 
\begin{align}
\mathcal{C}_{2,v} := 0.
\end{align}

Collecting the $\bigO(\eps^{\frac{3}{2}})$ terms from (\ref{all.1}) - (\ref{all.8}), the system satisfied by the second boundary layers is:
\begin{align} \label{pr.BVP.2}
\left.
\begin{aligned}
&\mu \p_x u^{2,0}_p - \p_{YY}u^{2,0}_p = f_2 := \eps^{-\frac{3}{2}}\mathcal{C}_{2,u}, \hspace{3 mm} \p_Y P^2_p = 0, \hspace{3 mm} \p_x u^{2,0}_p + \p_Y v^{2,0}_p = 0, \\
& u^{2,0}_p|_{x = 0} = , \hspace{3 mm} u^{2,0, -}_p|_{Y = 0} = -u^2_e|_{y = 0}, \hspace{3 mm} u^{2,0,+}_p|_{y = 2} = -u^2_e|_{y = 2}, \\
& u^{2,0, -}_p|_{Y \rightarrow \infty} = 0, \hspace{3 mm} u^{2,0,+}_p|_{Y \rightarrow -\infty} = 0 \\
& v^{2,0, -}_p = - \int_0^{Y_-} \p_x u^{2,0,-}_p, \hspace{3 mm} v^{2,0,+}_p = -\int_2^{Y_+} \p_x u^{2,0,+}_p. 
\end{aligned}
\right\}
\end{align}

Note that in the same manner as in $u^{1}_p, v^1_p$, we have two boundary layer variables, $Y_-, Y_+$. We compactify the notation in (\ref{pr.BVP.2}) to simultaneously address both. Define the cut-off layer via:
\begin{align}
u^{2}_p := \chi(\frac{\sqrt{\eps}Y}{100}) u^{2,0}_p - \frac{\sqrt{\eps}}{100} \chi'(\frac{\sqrt{\eps}Y}{100}) \int_0^x v^2_p, \hspace{3 mm} v^{2}_p := \chi(\frac{\sqrt{\eps}Y}{100}) v^{2,0}_p.
\end{align}

\begin{lemma} Assume high order compatibility conditions in the sense of (\ref{comp.BL}) for both $u^1_p, u^2_p$. There exist unique solutions to (\ref{pr.BVP.1}) and (\ref{pr.BVP.2}) that are regular and satisfy the following estimates: 
\begin{align}
&|Y^m \p_x^k \p_Y^l \{u^1_p, v^1_p\}| \le c_0(\frac{\mu'''}{\mu}) \times C_{m,k,l} \text{ for any } k,l,m \ge 0, \\
&|Y^m \p_x^k \p_Y^l u^2_p| \le c_0(\frac{\mu'''}{\mu}) \times C_{m,k,l} \text{ for any } k, l, m \ge 0, \\
&|\p_x^k v^2_p| \le c_0(\frac{\mu'''}{\mu})\times C_{k} \text{ for any } k \ge 0. 
\end{align}
\end{lemma}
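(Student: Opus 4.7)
The plan is to view each component system as a linear (degenerate) parabolic initial-boundary value problem on the quarter-space $(x,Y) \in (0,L) \times (0,\infty)$, with $x$ playing the role of time and $Y$ the spatial variable, then run weighted energy estimates for existence, uniqueness, and high-regularity bounds.

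For (\ref{pr.BVP.1}), first homogenize the boundary data by setting $\tilde{u} := u^{1,0,-}_p + \eta(Y)\, u^1_e|_{y=0}$ for a cutoff $\eta$ equal to $-1$ near $Y = 0$ and vanishing for large $Y$; this reduces the problem to one with zero Dirichlet data at $Y=0$, decay as $Y\to\infty$, and smooth, compactly supported forcing. A standard Galerkin approximation then yields a unique weak solution, and the compatibility conditions (\ref{comp.BL}) together with their higher-order analogues (obtained by successively differentiating the equation in $x$ and reading off the traces) give smoothness up to the corner $(x,Y) = (0,0)$. Uniqueness follows from the usual energy argument applied to the difference of two solutions, and the $Y_+$ piece is handled symmetrically. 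For the weighted bounds $|Y^m \partial_x^k \partial_Y^l \{u^1_p, v^1_p\}| \lesssim c_0(\mu'''/\mu) \cdot C_{m,k,l}$, I would multiply the equation and its $x$-derivatives by $Y^{2m}u$ and integrate: the coercive contributions are $\tfrac{1}{2}\int_{x=L}\mu Y^{2m}u^2$ together with $\int Y^{2m}|\partial_Y u|^2$, while the commutator $[\partial_{YY}, Y^{2m}]$ produces lower-order weighted terms that close by induction on $m$. For higher $Y$-derivatives, use the equation $\partial_{YY}u = \mu \partial_x u$ to trade pairs of $Y$-derivatives for $x$-derivatives carrying smooth coefficients, and then invoke Sobolev embedding on the quarter-space to upgrade the weighted $L^2$ bounds to the claimed pointwise ones. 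The constant $c_0(\mu'''/\mu)$ enters through the boundary data $-u^1_e|_{y=0}$, whose norm is controlled by $\mu'''/\mu$ via the elliptic problem (\ref{divide.1}).

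For (\ref{pr.BVP.2}) the template is identical. The new input is the forcing $f_2 = \eps^{-3/2}\mathcal{C}_{2,u}$, which I would analyze term-by-term: every piece listed in $\mathcal{C}_{2,u}$ and the cut-off error $\mathcal{C}^1_{cut}$ is smooth in $(x,Y)$ and rapidly decaying in $Y$, inheriting fast decay from $u^1_p, v^1_p$ and smoothness from the already-constructed Euler layers, so that $f_2$ is bounded uniformly in $\eps$. The Dirichlet data $-u^2_e|_{y=0,2}$ is provided by the Euler lemma, with high-order compatibility conditions at all four corners prescribed as in (\ref{comp.BL}). The same energy-plus-induction scheme yields $|Y^m \partial_x^k \partial_Y^l u^2_p| \lesssim c_0 \cdot C_{m,k,l}$. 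For $v^{2,0,-}_p = -\int_0^{Y_-}\partial_x u^{2,0,-}_p$, no $Y$-decay holds in general, so only the bounds $|\partial_x^k v^2_p| \lesssim c_0 \cdot C_k$ are asserted, and these follow directly by integrating the rapid decay of $\partial_x^{k+1} u^2_p$ in $Y$.

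I expect the main obstacle to be the degeneracy of the parabolic coefficient at $Y = 0$: since $\mu(\sqrt{\eps}\,Y)\to 0$ as $Y\to 0$, the leading $x$-transport term of the operator vanishes at the very boundary where the Dirichlet data lives, so standard non-degenerate parabolic theory does not apply directly. This is compensated by $\mu'(0) > 0$, so that $\mu(\sqrt{\eps}\,Y) \sim \mu'(0)\sqrt{\eps}\,Y$ near $Y = 0$; a Hardy-type estimate then couples $\sqrt{Y}\,\partial_x u$ with $\partial_Y u$ and the weighted energy estimates close. A secondary technical point is to verify that the auxiliary pressure $P^{1,a}_p$, defined explicitly so that $\mathcal{C}_{2,v} = 0$, is itself smooth in $(x,Y)$ with the required $Y$-decay; this follows by examining the integrand in its definition term-by-term, using the already-established decay and smoothness of $u^1_p, v^1_p$ together with the Euler layers.
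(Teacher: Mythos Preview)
Your proposal is correct and far more detailed than the paper's own proof, which reads in its entirety: ``These follow from standard heat equation estimates.'' Your weighted-energy scheme (homogenize, Galerkin, multiply by $Y^{2m}u$, bootstrap $Y$-derivatives via the equation, Sobolev) is exactly the standard parabolic machinery being invoked, and the degeneracy of $\mu(\sqrt{\eps}Y)$ at $Y=0$ that you flag is a genuine subtlety the paper simply does not address; your Hardy-type resolution is the natural way to handle it.
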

\begin{proof}
These follow from standard heat equation estimates. 
\end{proof}

The following are the errors contributed to the next layer: 
\begin{align} \n
\mathcal{C}_{3,u} := &\eps^{\frac{5}{2}}[u^1_e + u^1_p + \sqrt{\eps}u^2_p + \sqrt{\eps}u^2_e] \p_x u^2_p + \eps^{\frac{3}{2}} u^2_p[\eps \p_x u^1_e + \eps \p_x u^1_p \\ \n 
&+ \eps^{\frac{3}{2}} \p_x u^2_p + \eps^{\frac{3}{2}} \p_x u^2_e] + \eps \p_Y u^2_p \Big[ \eps v^1_e + \eps^{\frac{3}{2}} v^1_p + \eps^{\frac{3}{2}} v^2_e + \eps^2 v^2_p \Big] \\ 
& + \eps^2 v^2_p \Big[ \mu' + \eps \p_y u^1_e + \sqrt{\eps} \p_Y u^1_p + \eps^{\frac{3}{2}} \p_y u^2_e  \Big] - \eps^{\frac{5}{2}} \p_{xx}u^2_p + \eps^{\frac{3}{2}} \mathcal{C}_{cut}, \\ \n
\mathcal{C}_{3,v} := &\eps^2 \p_x v^2_p \Big[ \mu + \eps u^1_e + \eps u^1_p + \eps^{\frac{3}{2}} u^2_e \Big] + \eps^{\frac{3}{2}} u^2_p \Big[ \eps \p_x v^1_e + \eps^{\frac{3}{2}} \p_x v^1_p + \eps^{\frac{3}{2}} \p_x v^2_e \Big] \\ \n &+ \eps^{\frac{7}{2}} u^2_p \p_x v^2_p + \eps^2 v^2_p \Big[ \eps \p_y v^1_e + \eps \p_y v^1_p + \eps^{\frac{3}{2}} \p_y v^2_e \Big] \\ 
& + \eps^{\frac{3}{2}} \p_Y v^2_p \Big[ \eps v^1_e + \eps^{\frac{3}{2}} v^1_p + \eps^{\frac{3}{2}} v^2_e \Big] + \eps^{\frac{7}{2}} v^2_p \p_y v^2_p.
\end{align}

Here $\mathcal{C}_{cut}$ is the error contributed by cutting off the layers: 
\begin{align} \n
\mathcal{C}_{cut} := &(1- \chi) f_2 + \sqrt{\eps} \mu \chi' v^{2,0}_p + 3\sqrt{\eps} \chi' \p_Y u^{2,0}_p \\ &+ 3\eps \chi'' u^{2,0}_p - \eps^{\frac{3}{2}} \chi''' \int_Y^\infty u^{2,0}_p. 
\end{align}

The total contributions to the remainder forcing is: 
\begin{align} \label{Fu.Fv}
\mathcal{F}_u := \mathcal{C}_{1,u} + \mathcal{C}_{3,u}, \hspace{3 mm} \mathcal{F}_v := \mathcal{C}_{1,v} + \mathcal{C}_{3,v}.
\end{align}

We can break up the forcing contribution into: 
\begin{align} \label{decomp.1}
\mathcal{F}_u = \mathcal{T}_{u,\eps^2} + \bigO(\eps^{\frac{5}{2}}), \hspace{5 mm} \mathcal{F}_v = \mathcal{T}_{v,\eps^2} + \bigO(\eps^{\frac{5}{2}}),
\end{align}

where the terms at $\bigO(\eps^2)$ are the following: 
\begin{align}
&\mathcal{T}_{u, \eps^2} := \eps^2 \underbrace{\Big[ u^1_e \p_x u^1_e + v^1_e \p_y u^1_e - \Delta u^1_e + \p_Y u^2_p v^1_e + \mu' v^2_p + \mu \chi' v^{2,0}_p + 3 \chi' \p_Y u^{2,0}_p \Big]}_{T_1}  \\
&\mathcal{T}_{v, \eps^2} := \eps^2 \underbrace{\Big[ u^1_e \p_x v^1_e + v^1_e \p_y v^1_e - \Delta v^1_e + \mu \p_x v^2_p \Big]}_{T_2}. 
\end{align}

Summarizing the above constructions: 
\begin{lemma} \label{Lemma.const.1} The following estimates are satisfied by $u_s, v_s$:
\begin{align} \label{prof.est.1}
&|\p_x u_s|+ |\p_y v_s| + |u_s - \mu| \le \min \{ \bigO(\sqrt{\eps})\tilde{y}, \bigO(\eps) \}, \\ \label{prof.est.2}
&|\p_y u_s - \mu'| \lesssim \sqrt{\eps}, \\ \label{prof.est.3}
&|\p_x^l v_s| \lesssim \eps \tilde{y} \text{ for } l \ge 0,
\end{align}

The following are satisfied by $T_{u, \eps^2}, T_{v,\eps^2}$:
\begin{align} \label{T1.1}
&||T_1, \frac{T_2}{\tilde{y}}||_2  \lesssim c_0(\frac{\mu'''}{\mu}), \\ \label{T1.2}
&||\p_y T_1, \p_y T_2||_{2}  \lesssim c_0(\frac{\mu'''}{\mu}) \times \eps^{-\frac{1}{4}}.
\end{align}
\end{lemma}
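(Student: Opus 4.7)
The plan is to verify each estimate by unfolding the construction. For the profile estimates (\ref{prof.est.1})--(\ref{prof.est.3}), I would combine two observations. On the one hand, each factor $u^i_e, u^i_p, v^i_e, v^i_p$ is uniformly bounded by a constant multiple of $c_0(\mu'''/\mu)$, so the triangle inequality applied to the definitions of $u_s - \mu, \p_x u_s, \p_y v_s$ yields the $\mathcal{O}(\eps)$ branch of (\ref{prof.est.1}). On the other hand, the matching conditions built into the construction — namely $u^{1,0,\pm}_p|_{Y=0} = -u^1_e|_{y=0,2}$, $u^{2,0,\pm}_p|_{Y=0} = -u^2_e|_{y=0,2}$, $v^2_e|_{y=0,2} = -v^1_p|_{y=0,2}$, together with $v^2_p|_{y=0,2}=0$ from the integral definition — force $u_s - \mu$, $\p_x u_s$, $\p_y v_s$ and $\p_x^l v_s$ to vanish identically at $y = 0, 2$. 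Given this vanishing at both walls and the $\sqrt{\eps}$ derivative bound asserted in (\ref{prof.est.2}), the fundamental theorem of calculus delivers the $\sqrt{\eps}\tilde{y}$ branch of (\ref{prof.est.1}) and estimate (\ref{prof.est.3}). Estimate (\ref{prof.est.2}) itself is immediate from the chain rule: the Euler contributions to $\p_y u_s - \mu'$ are $\mathcal{O}(\eps)$, while $\eps\, \p_y u^1_p = \sqrt{\eps}\, \p_Y u^1_p = \mathcal{O}(\sqrt{\eps})$ saturates the bound, and $\eps^{3/2} \p_y u^2_p = \mathcal{O}(\eps)$ is subleading.

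For the forcing estimates I would work term-by-term on the explicit formulas for $T_1, T_2$ displayed in the decomposition above. For $\|T_1\|_{L^2}$, the Euler components $u^1_e \p_x u^1_e, v^1_e \p_y u^1_e, \Delta u^1_e$ are bounded in $L^\infty$ by $c_0$, and the boundary-layer pieces $\p_Y u^2_p \cdot v^1_e$, $\mu' v^2_p$, together with the cut-off remainders $\mu \chi' v^{2,0}_p$, $\chi' \p_Y u^{2,0}_p$, are uniformly bounded factors on a bounded domain. For the weighted estimate $\|T_2/\tilde{y}\|_{L^2}$, the key is to check that every summand of $T_2$ vanishes at $y = 0, 2$: $u^1_e \p_x v^1_e$ and $v^1_e \p_y v^1_e$ vanish because $v^1_e|_{\p\Omega} = 0$; $\Delta v^1_e = (\mu''/\mu) v^1_e - \mu'''/\mu$ vanishes at the top and bottom walls by the high-order vanishing hypothesis (\ref{as.euler.1}); and $\mu \p_x v^2_p$ vanishes because $\mu(0) = \mu(2) = 0$. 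Smoothness of the vanishing factors then makes $T_2/\tilde{y}$ bounded in $L^\infty$, hence in $L^2$.

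The main obstacle is the derivative estimate (\ref{T1.2}), where the boundary-layer scaling must be tracked precisely. The worst terms are those containing the non-decaying profile $v^2_p$, for example $\mu' v^2_p$ in $T_1$ and $\mu \p_x v^2_p$ in $T_2$; differentiating in $y$ yields $\mu' \p_y v^2_p = \mu' \eps^{-1/2}\p_Y v^2_p \chi + (\text{lower order from the cut-off})$. Since $\p_Y v^2_p = -\p_x u^{2,0}_p$ decays in $Y$, the change of variables $y = \sqrt{\eps}\, Y$ gives
\begin{equation*}
\bigl\|\eps^{-1/2} \p_Y v^2_p \bigr\|_{L^2_y}^2 = \eps^{-1} \int |\p_Y v^2_p|^2 \sqrt{\eps} \ud Y \lesssim \eps^{-1/2},
\end{equation*}
which yields the stated $\eps^{-1/4}$ bound. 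Terms involving boundary-layer profiles that do decay in $Y$, such as $\p_Y u^i_p$, benefit from the support having $y$-width $\mathcal{O}(\sqrt{\eps})$ and produce only $\mathcal{O}(\eps^{1/4})$, which is subleading and consistent with (\ref{T1.2}).
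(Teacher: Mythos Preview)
Your approach is essentially the same as the paper's: the paper's proof is a single sentence declaring everything trivial except $\|T_2/\tilde{y}\|_2$, which it handles exactly as you do by checking that every summand of $T_2$ vanishes at $y=0,2$ (invoking (\ref{eqn.eul.vort.1}) and (\ref{as.euler.1}) for $\Delta v^1_e$). Your additional detail on the profile estimates and on (\ref{T1.2}) is correct and fleshes out what the paper leaves implicit.

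One small slip: you write that $\mu\,\p_x v^2_p$ vanishes at the walls ``because $\mu(0)=\mu(2)=0$,'' but in the general setting of Theorem~\ref{Th.no.slip} one has $\mu(2)=u_b$, which need not be zero. The correct reason---which you already stated a few lines earlier---is that $v^2_p|_{y=0,2}=0$ by the integral definition, so $\p_x v^2_p|_{y=0,2}=0$ regardless of $\mu(2)$.
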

\begin{proof}
Only $\frac{T_2}{y}$ is non-trivial, and it follows by examining that all terms in $T_2$ satisfy $T_2|_{y = 0} = 0 = T_2|_{y = 2}$. Note that we have used (\ref{eqn.eul.vort.1}) and (\ref{as.euler.1}) to conclude that $\Delta v^1_e|_{y = 0, 2} = 0$. 
\end{proof}

\break

%\iffalse

\renewcommand\refname{References Cited}
\pagenumbering{gobble}
\def\bibindent{3.5em}

\end{document}